\theoremstyle{plain}
\newtheorem{thm}{Theorem}[section]
\newtheorem{theorem}[thm]{Theorem}
\newtheorem{lemma}[thm]{Lemma}
\newtheorem{proposition}[thm]{Proposition}
\theoremstyle{definition}
\newtheorem{definition}[thm]{Definition}
\numberwithin{equation}{section}
\newcommand{\Pic}{{\rm Pic}}
\newcommand{\Spec}{{\rm Spec \,}}
\newcommand{\sO}{{\mathcal O}}
\newcommand{\sU}{{\mathcal U}}
\newcommand{\A}{{\mathbb A}}
\newcommand{\C}{{\mathbb C}}
\newcommand{\F}{{\mathbb F}}
\renewcommand{\P}{{\mathbb P}}
\newcommand{\Q}{{\mathbb Q}}
\newcommand{\R}{{\mathbb R}}
\newcommand{\Z}{{\mathbb Z}}
\title [Salem numbers of maximal degree]{Automorphisms of elliptic  K3 surfaces and Salem numbers of maximal degree}
\author{H\'el\`ene Esnault, Keiji Oguiso and Xun Yu} 
\address{Freie Universit\"at Berlin, Arnimallee 3, 14195, Berlin,  Germany}
\email{esnault@math.fu-berlin.de}
\address{Department of Mathematics, Osaka University, Toyonaka 560-0043, Osaka, Japan and Korea Institute for Advanced Study, Hoegiro 87, Seoul, 
133-722, Korea}
\email{oguiso@math.sci.osaka-u.ac.jp}
\address{Center for Geometry and its Applications, POSTECH, Pohang 790-784, Korea}
\email{yxn100135@postech.ac.kr}
\thanks{The first  author is supported by  the Einstein program and the ERC
Advanced
Grant 226257. The second author is supported by JSPS Grant-in-Aid (S) No 25220701, JSPS Grant-in-Aid (S) No 22224001, JSPS Grant-in-Aid (B) No 22340009, and by KIAS Scholar Program.}
 \dedicatory{Dedicated to Professor Shing-Tung Yau on the occasion of his
sixty-fifth birthday.}
\date{November 17, 2014}
\begin{document}
\begin{abstract}  Using  elliptic structures, we show that any supersingular K3 surface of Artin invariant $1$ in characteristic $p \not= 5$, $7$, $13$ has
an automorphism the  entropy of which  is the natural logarithm of a Salem number of degree $22$.

\end{abstract}
\maketitle
\section{Introduction}\label{intro}

If $p$ is a prime number,  there is an Artin invariant $1$ supersingular K3 surface $X(p)$, defined over the congruence field $\F_p$. It is unique up to isomorphism (see  Section~\ref{sec:notations}).   Since Ogus' seminal work \cite{Ogu79}, \cite{Ogu83}, this surface has been studied from various viewpoints.  

\medskip

In \cite{ES13} it is shown that, as in characteristic $0$,  in positive characteristic  the maximum  of the absolute values of the algebraic integers, which are  eigenvalues of an  automorphism of a smooth projective surface acting on its $\ell$-adic cohomology, is taken on the N\'eron-Severi group.  Thus by analogy with complex geometry, one calls entropy the logarithm of this maximum. One knows that  the entropy of an automorphism of a K3 surface is either $0$ or the logarithm of a Salem number (see  Section~\ref{ss:salem}), then of degree at most the rank of the N\'eron-Severi group, thus at most $20$ for projective K3 surfaces in characteristic $0$.

\medskip

Over  $k=\bar \F_p$, 
Jang  (\cite{Jan14})  showed that the image of the canonical representation
$${\rm Aut}\,(X(p)) \to {\rm GL}\, (H^0(X(p), \omega_{X(p)})) \simeq k^{\times}$$
is isomorphic to the cyclic group  of order $p+1$. 
In particular, for $p$ large,  there are automorphims which are not geometrically liftable to characteristic $0$.
His proof relies on Ogus' Torelli theorem \cite{Ogu83}. From this, and from Shioda's study  of Mordell-Weil lattices \cite{Sh90},  it is deduced in \cite{EO14} that for $p$ very large,  there are automorphisms of $X(p)$  of positive entropy which  are not geometrically liftable to characteristic $0$ (see \ref{ss:liftingautomorphism} for the definition of this liftability notion).

\medskip

The main result of this note is
\begin{theorem} \label{IntroThm1}
Let $p \not= 5$, $7$, $13$. Then there is an automorphism $f \in {\rm Aut}\, (X(p))$, defined over $\bar \F_p$,  the entropy  of which is the logarithm of a Salem number of degree $22$. 
\end{theorem}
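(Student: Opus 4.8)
The plan is to reduce the construction of $f$ to producing an isometry $F$ of the N\'eron--Severi lattice $NS(X(p))$ whose characteristic polynomial is a Salem polynomial of degree $22$, and then to realize $F$ geometrically through the elliptic structures on $X(p)$ together with Ogus' crystalline Torelli theorem. Recall that $NS(X(p))$ has rank $22$ and signature $(1,21)$ and is $p$-elementary with discriminant group $(\Z/p\Z)^{2}$; since $b_2=22$, a Salem factor of degree $22$ is of maximal possible degree, and by the result of \cite{ES13} quoted above, the entropy of any $f$ realizing $F$ is $\log\lambda$, where $\lambda$ is the spectral radius of $F$ acting on $NS(X(p))$. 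Thus everything comes down to exhibiting, and then geometrically realizing, a single well-chosen hyperbolic isometry.

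First I would fix a Jacobian elliptic fibration $\pi\colon X(p)\to\P^1$ and record the Shioda--Tate decomposition, writing $NS(X(p))$ in terms of the hyperbolic plane $U$ spanned by a fibre class and the zero section, the root lattices of the reducible fibres, and the Mordell--Weil lattice. The purpose of this model is that it furnishes an explicit stock of automorphisms: translations by Mordell--Weil sections (acting on $NS(X(p))$ by Eichler--Siegel transvections), the fibrewise inversion, and automorphisms of the base $\P^1$ permuting the singular fibres. Any automorphism preserving a single pencil fixes the isotropic fibre class and hence has vanishing entropy, so the essential step is to combine the symmetries of \emph{two} distinct elliptic pencils on $X(p)$, whose fibre classes are not proportional, in such a way that the resulting isometry $F$ admits no proper $F$-invariant subspace of $NS(X(p))\otimes\Q$.

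Next I would assemble such a composition $F$ and verify directly that its characteristic polynomial $S(x)$ is reciprocal, has exactly one real root $\lambda>1$ together with its inverse $1/\lambda$, and has its remaining $20$ roots on the unit circle, with no cyclotomic factor; this exhibits $\lambda$ as a Salem number of degree $22$. To pass from the isometry to an automorphism I would invoke Ogus' Torelli theorem \cite{Ogu83}: because $X(p)$ has Artin invariant $1$ its crystalline period is especially rigid, so the period-compatibility in Torelli imposes at most a mild condition and the essential point is to arrange that $F$ preserves the ample cone. Replacing $F$ by a suitable power and composing with reflections in the Weyl group of $(-2)$-classes so as to stabilize a single chamber, the eigenclass for $\lambda$ lies on the boundary of the nef cone and $F$ maps ample classes to ample classes; Torelli then produces $f\in\mathrm{Aut}(X(p))$, defined over $\bar\F_p$, with $f^\ast=F$ on $NS(X(p))$, and by the previous paragraph its entropy is $\log\lambda$.

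The main obstacle, and the likely source of the exclusion $p\neq 5,7,13$, is the compatibility of the chosen Salem polynomial $S$ with the $p$-elementary structure of $NS(X(p))$. Realizing $S$ as the characteristic polynomial of an isometry of a lattice of signature $(1,21)$ with discriminant group $(\Z/p\Z)^{2}$ imposes local conditions at $p$ governed by the values $S(1)$, $S(-1)$ and the associated resultants; the excluded primes are precisely those dividing these arithmetic invariants, where either no isometry exists with the prescribed $p$-elementary discriminant or a cyclotomic factor is forced, dropping the degree below $22$. Controlling these congruence obstructions uniformly in $p$, while simultaneously guaranteeing that $S$ is irreducible (so that the Salem factor has degree exactly $22$ and not a proper divisor), is the technical heart of the argument.
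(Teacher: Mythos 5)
Your proposal leaves its self-declared ``technical heart'' unresolved, and that heart is precisely what the paper's argument is designed to avoid. You propose to write down an explicit composition $F$ of Eichler--Siegel transvections and inversions from two pencils, verify by hand that its characteristic polynomial $S(x)$ is an irreducible Salem polynomial of degree $22$, and do this uniformly in $p$; no such verification is sketched, and it is genuinely hard. The paper never exhibits any explicit isometry. Instead it proves a purely group-theoretic statement (Theorem~\ref{thm:BlancCantat}, following Benoist--de la Harpe and Blanc--Cantat): if a subgroup $G \subset {\rm SO}^{+}(L)$ of a hyperbolic lattice of even rank $2d$ has no proper nonzero invariant $\R$-subspace, then $G$ is Zariski dense in ${\rm SO}(L_\R)$, and since the locus of monic degree-$2d$ polynomials divisible by some cyclotomic polynomial is a proper closed subvariety (and is avoided by some element of ${\rm SO}(L)(\R)$ by an explicit rotation matrix, using evenness of the rank), $G$ must contain an element whose characteristic polynomial has no cyclotomic factor, hence is Salem of degree $2d$ by Proposition~\ref{salem}. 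The irreducibility condition you worry about comes for free from this density argument. The hypothesis of no invariant subspace is then checked (Theorem~\ref{thm:StableSublattice} plus Lemmas~\ref{lem:inequality}--\ref{lem:invariant}) using exactly your two ingredients --- a fibration of maximal Mordell--Weil rank $20$ and a second fibration of positive rank --- but only qualitatively: any subspace stable under a rank-$20$ abelian group fixing the isotropic fibre class $e_1$ must lie in $e_1^{\perp}$, and the orbit of $e_1$ under the second fibration's symmetries rules this out.

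Two further points are wrong or gapped in your sketch. First, the Torelli step is unnecessary and, as you set it up, broken: the group the paper works with consists of genuine automorphisms from the start (Mordell--Weil translations act on $X(p)$ as automorphisms, since $X$ is a minimal surface so ${\rm Bir}(X)={\rm Aut}(X)$), so nothing needs to be realized geometrically. In your version, composing $F$ with Weyl-group reflections to make it preserve the ample chamber changes its characteristic polynomial, so the Salem polynomial you verified for $F$ is not the one of the automorphism you would obtain; and having the leading eigenclass on the boundary of the nef cone does not by itself give ample-cone preservation. Second, your diagnosis of the exclusion $p \neq 5,7,13$ is incorrect: it has nothing to do with $p$-elementary discriminant obstructions or values $S(\pm 1)$. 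It is inherited directly from the hypothesis of Shioda's theorem (Theorem~\ref{IntroShioda}), which produces the Mordell--Weil rank $20$ fibration only for $p=11$ or $p>13$; the cases $p=2,3$ are then quoted from earlier work of Blanc--Cantat and Esnault--Oguiso, while the second fibration of positive rank (rank $4$) comes from the Kummer structure $X(p)\simeq {\rm Km}(E\times E)$.
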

In particular, those automorphisms are not geometrically liftable to characteristic $0$. 
The result is known over $k=\bar \F_p$ for $p = 2$ (\cite{BC13})  and $p=3$  (\cite{EO14}). 
\medskip

While for $p=2$ the proof relies on the very detailed study of $X(2)$ in \cite{DK02}, and for $p=3$,  it is computer aided and relies on the explicit study of ${\rm Aut}(X(3))$ in \cite{KS12}, our proof  of Theorem~\ref{IntroThm1} is abstract and   based on the theory of the Mordell-Weil groups of $X(p)$ by Shioda \cite{Sh13}. 
We show the following strengthening of Theorem~\ref{IntroThm1}:
\begin{theorem} \label{IntroThm2}
Let  $k$ be an algebraically  closed field of characteristic $p \ge 0$. Let $X$ be a K3 surface over $k$ of Picard number $\rho =2d \ge 4$. Assume that $X$ has two non-isomorphic elliptic fibrations $\varphi_1 : X \to  \P^1$ $(i=1$, $2$), such that the Mordell-Weil group ${\rm MW}\, (\varphi_1)$ of $\varphi_1$ is of maximal rank, that is $2d - 2 $, and ${\rm MW}\, (\varphi_2)$ is of positive rank. Then $X$ has an automorphism $f$, the entropy of which  is the  logarithm of a Salem number of degree $2d$.
\end{theorem}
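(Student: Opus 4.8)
The plan is to realize the desired automorphism as a product of translations coming from the two elliptic pencils, to compute its action on ${\rm NS}(X)$ as a composite of Eichler transvections, and to show that this action is a Salem isometry of the maximal degree $2d$. Throughout I use that, by \cite{ES13}, the spectral radius of an automorphism on $H^\ast$ is attained on ${\rm NS}(X)$, so the entropy is the logarithm of the spectral radius on ${\rm NS}(X)$, and that this spectral radius, when $>1$, is a Salem number.

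First I would set up the lattice picture for $\varphi_1$. Writing $F_1$ for a fibre class and $O_1$ for the zero section, the trivial lattice is $\langle O_1,F_1\rangle$; since ${\rm MW}(\varphi_1)$ has the maximal rank $2d-2$, the Shioda--Tate formula forces every fibre of $\varphi_1$ to be irreducible, so $\langle O_1,F_1\rangle$ is a hyperbolic plane and ${\rm NS}(X)\otimes\Q\cong(\langle O_1,F_1\rangle\otimes\Q)\perp(L_1\otimes\Q)$ with $L_1$ the negative definite Mordell--Weil lattice of rank $2d-2$. Each section $P$ gives a translation $t_P\in{\rm Aut}(X)$ fixing $F_1$, acting on ${\rm NS}(X)$ as the Eichler transvection
\[ E_{F_1,v}(x)=x-\langle x,v\rangle F_1+\langle x,F_1\rangle v-\tfrac12\langle v,v\rangle\langle x,F_1\rangle F_1, \]
where $v\in L_1$ is the class attached to $P$. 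As $P$ runs through ${\rm MW}(\varphi_1)$ the vectors $v$ fill a finite index sublattice of $L_1$, so the image $U_1$ of the translation group is a finite index subgroup of the integral unipotent radical $N_1(\Z)$ of the parabolic of $O({\rm NS}(X))$ fixing the isotropic line $\langle F_1\rangle$; thus $U_1\cong\Z^{2d-2}$ is Zariski dense in $N_1$. From $\varphi_2$, whose fibre class $F_2$ is not proportional to $F_1$ and satisfies $F_1\cdot F_2>0$, the positive rank of ${\rm MW}(\varphi_2)$ supplies one nontrivial transvection $u_2=E_{F_2,w}$, $w\neq 0$, fixing the distinct isotropic line $\langle F_2\rangle$.

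Next I would reduce the theorem to a statement about $\Gamma=\langle U_1,u_2\rangle\subset O({\rm NS}(X))$. Every element of $\Gamma$ is induced by an honest automorphism (a product of translations), so it suffices to exhibit $g\in\Gamma$ whose characteristic polynomial on ${\rm NS}(X)$ is an irreducible Salem polynomial of degree $2d$. Because ${\rm NS}(X)$ has signature $(1,2d-1)$, any isometry of spectral radius $>1$ has exactly one eigenvalue of absolute value $>1$ and one of absolute value $<1$, the remaining $2d-2$ lying on the unit circle, so its characteristic polynomial is automatically a Salem polynomial times cyclotomic factors. Hence the whole content is to find $g\in\Gamma$ with (i) spectral radius $>1$ and (ii) no cyclotomic factor, i.e.\ of maximal Salem degree $2d$.

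Point (i) is the easy half: composing $u_2$ with a suitable element of $U_1$ multiplies two parabolic isometries fixing the distinct isotropic lines $\langle F_1\rangle\neq\langle F_2\rangle$ (with $F_1\cdot F_2>0$), which is hyperbolic, so the spectral radius exceeds $1$. The step I expect to be hardest is (ii), maximality of the Salem degree, equivalently the absence of roots of unity among the eigenvalues; this is exactly where the maximal rank of ${\rm MW}(\varphi_1)$ is used, since it makes $U_1$ a lattice in the \emph{full} unipotent radical $N_1$ of dimension $2d-2$. I would argue that $\Gamma$ leaves invariant no proper nonzero subspace $W\subset{\rm NS}(X)\otimes\Q$: invariance under all of $N_1$ forces $W$ to be one of $0$, $\langle F_1\rangle$, a subspace with $\langle F_1\rangle\subseteq W\subseteq F_1^\perp$, $F_1^\perp$, or the whole space (because $\{(E_{F_1,v}-1)x\}$ spans $F_1^\perp$ on $x\notin F_1^\perp$ and spans $\langle F_1\rangle$ on $x\in F_1^\perp\setminus\langle F_1\rangle$, by nondegeneracy of $L_1$); and since $F_1\cdot F_2>0$ and $w\neq 0$ one checks $u_2F_1\cdot F_1=-\tfrac12\langle w,w\rangle\langle F_1,F_2\rangle^2>0$, so $u_2$ moves $F_1$ out of $F_1^\perp$ and preserves none of these proper subspaces. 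Thus $\Gamma$ acts absolutely irreducibly, equivalently its Zariski closure is ${\rm SO}({\rm NS}(X)\otimes\Q)$, and a regular element $g\in\Gamma$ then has irreducible characteristic polynomial of degree $2d$ — either produced explicitly, or via the general fact that a Zariski dense subgroup of ${\rm SO}(1,2d-1)$ contains elements with irreducible characteristic polynomial. Combined with (i) and the signature constraint, that polynomial is a Salem polynomial of degree $2d$, so the corresponding automorphism $f$ has entropy equal to the logarithm of a Salem number of degree $2d$, as required.
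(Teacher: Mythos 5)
Your proposal is sound in substance, and its overall architecture coincides with the paper's: generate a subgroup of ${\rm SO}^{+}({\rm NS}(X))$ from the two Mordell--Weil groups, show it leaves invariant no proper nonzero subspace of ${\rm NS}(X)_{\R}$, and then use Zariski density (Benoist--de la Harpe \cite{BH04}) to find an element whose characteristic polynomial has no cyclotomic factor, hence is a Salem polynomial of degree $2d$ --- this last step being exactly the paper's Theorem~\ref{thm:BlancCantat}, i.e.\ the Blanc--Cantat argument \cite{BC13} that you invoke as a ``general fact''. Where you genuinely diverge is in how irreducibility is established. The paper works with the full group ${\rm Aut}(X)^{*0}$: it considers the whole ${\rm Aut}(X)$-orbit of elliptic fibrations of maximal Mordell--Weil rank (Definition~\ref{def:maximum}), proves that their fiber classes span ${\rm NS}(X)_{\R}$ (Lemma~\ref{lem:positiveentropy}, which rests on Lemma~\ref{lem:inequality}), and applies the abstract Theorem~\ref{thm:StableSublattice} (any $\Z^r \subset {\rm SO}(L)(\Z)$ fixing a primitive isotropic $e$ confines stable subspaces to $e^{\perp}$) to \emph{every} fibration in the orbit, so that a stable subspace lies in $\cap_i e_i^{\perp} = \{0\}$. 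You instead work with the concrete group $\Gamma = \langle U_1, u_2\rangle$: maximality of ${\rm MW}(\varphi_1)$ plus Shioda--Tate makes all fibers of $\varphi_1$ irreducible and $U_1$ a lattice in the full unipotent radical $N_1$, whose invariant subspaces you classify completely --- every proper nonzero one contains $F_1$ and lies in $F_1^{\perp}$. This conclusion is strictly finer than that of Theorem~\ref{thm:StableSublattice} (which only gives containment in $e^{\perp}$), and it is what allows a \emph{single} extra transvection, via $\langle u_2F_1, F_1\rangle = -\tfrac12 \langle w,w\rangle\langle F_1,F_2\rangle^2 > 0$, to rule out all candidates at once, with no need for the orbit-of-fibrations and spanning argument. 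Your route is more economical and explicit; the paper's route, through the abstract lemma from \cite{Og09}, avoids Eichler-transvection formulas and the reduction to irreducible fibers, and isolates a statement of independent use.

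Three repairs are needed, none requiring a new idea. (1) If $\varphi_2$ has reducible fibers, translation by a section of infinite order is \emph{not} literally $E_{F_2,w}$: it may act nontrivially on $F_2^{\perp}/\langle F_2\rangle$. Since that quotient is negative definite, its integral orthogonal group is finite, so a suitable power of the translation acts trivially there and is an honest transvection with $w \notin \langle F_2\rangle$; take that power as $u_2$ (the paper's Lemma~\ref{lem:inequality} exploits the same finiteness, via $(e_1+f)^{\perp}$, instead of a formula). (2) A product of two parabolics fixing distinct isotropic lines need not be hyperbolic (already false in ${\rm SL}_2(\Z)$), so your step (i) is not immediate as stated; but it is also redundant: once the characteristic polynomial has no cyclotomic factor, Kronecker's theorem forces an eigenvalue off the unit circle, so the spectral radius exceeds $1$ automatically. (3) Your phrase ``a regular element then has irreducible characteristic polynomial'' is not correct --- regular elements need not have irreducible characteristic polynomial over $\Q$ --- so the weight falls entirely on the cited ``general fact'', which is Theorem~\ref{thm:BlancCantat}: Zariski density gives an element with no cyclotomic factor, because the locus of such characteristic polynomials is a nonempty open set, and nonemptiness uses the evenness of the rank $2d$ (Proposition~\ref{prop:evenness}; in odd rank every element of ${\rm SO}(L)(\Z)$ has $1$ as an eigenvalue); irreducibility then follows from the Salem factorization of Proposition~\ref{salem}, not from regularity. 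You never flag where evenness enters, and it is essential.
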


One deduces Theorem \ref{IntroThm1} from Theorem \ref{IntroThm2} using  Shioda's theorem 
\cite{Sh13}:

\begin{theorem} \label{IntroShioda}
Let $p = 11$ or $p > 13$. Then $X(p)$ admits an elliptic fibration of  Mordell-Weil rank  $20 = 22 - 2$.
\end{theorem}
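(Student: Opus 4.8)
The plan is to exploit the uniqueness of $X(p)$: since $X(p)$ is the unique supersingular K3 surface of Artin invariant $1$ over $\bar\F_p$, it suffices to exhibit \emph{some} supersingular K3 surface $Y$ of Artin invariant $1$ carrying an elliptic fibration whose Mordell-Weil rank is $20$, and then invoke the isomorphism $Y \cong X(p)$. Rather than manipulate $X(p)$ abstractly, I would first write down a concrete elliptic surface in Weierstrass form $Y : y^2 = x^3 + A(t)\,x + B(t)$ over $\bar\F_p$, with $\deg A \le 8$ and $\deg B \le 12$ so that $Y$ is a K3 surface, and chosen of \emph{Delsarte type} (as few monomials in $A$, $B$ as possible). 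The point of the Delsarte shape is that the second cohomology decomposes under a torus action into character eigenspaces, so that the transcendental rank, equivalently the value of $\rho(Y)$, can be computed by an arithmetic criterion on characters modulo $p$, which is exactly the machinery Shioda developed for computing Picard numbers.

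Next I would read off the singular fibers from the discriminant $\Delta = -16(4A^3 + 27B^2)$, using the orders of vanishing of $A$, $B$, $\Delta$ together with Tate's algorithm. The crucial geometric input is to arrange that, for $p = 11$ and $p > 13$, \emph{every} singular fiber is irreducible (type $I_1$ or $II$), so that the trivial sublattice of $\Pic(Y)$ generated by the zero section and a general fiber has rank exactly $2$ and $\sum_v (m_v - 1) = 0$. The excluded primes $p = 5$, $7$, $13$ are expected to be precisely those for which the reduction type degenerates, producing an extra reducible fiber (hence a new class in the trivial lattice) or destroying the K3 condition; this is where the numerical hypothesis on $p$ enters the geometry.

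The main step, and the principal obstacle, is to show that $Y$ is supersingular with Artin invariant $1$, which is what pins down $Y \cong X(p)$. Using Shioda's character-sum computation for Delsarte surfaces, one verifies that for $p = 11$ and $p > 13$ every relevant character contributes an algebraic class, so the transcendental cohomology vanishes and $\rho(Y) = 22$. Simultaneously one must compute the determinant of the Néron–Severi lattice and check it equals $-p^2$: since here the trivial lattice has determinant $\pm 1$, this reduces to computing the discriminant of the Mordell–Weil lattice (up to torsion) via Shioda's height-pairing formalism. Evaluating the relevant Jacobi/character sums and tracking their vanishing modulo $p$ is the delicate part, and it is here that the congruence behaviour responsible for the exceptional primes becomes visible.

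Finally, with $\rho(Y) = 22$ established and all fibers irreducible, the Shioda–Tate formula
$$\rho(Y) \;=\; 2 + \rank\, {\rm MW}(\varphi) + \sum_{v}(m_v - 1),$$
together with $\sum_v (m_v - 1) = 0$, yields $\rank\, {\rm MW}(\varphi) = 22 - 2 = 20$. Because $Y$ is supersingular of Artin invariant $1$, uniqueness gives $Y \cong X(p)$, so the fibration just constructed is the desired elliptic fibration on $X(p)$ of Mordell–Weil rank $20$.
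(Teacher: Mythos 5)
Your outline is, at the level of strategy, a fair reconstruction of what actually lies behind this statement: note that the paper itself gives no proof of Theorem~\ref{IntroShioda} at all, but quotes it as Shioda's theorem from \cite{Sh13}, and Shioda's proof there does proceed essentially as you describe --- Delsarte-type elliptic K3 surfaces in Weierstrass form, Picard number computations via the character/Fermat-cover machinery, the Shioda--Tate formula with all fibers irreducible so that ${\rm rank}\,{\rm MW}(\varphi)=\rho-2$, a discriminant computation pinning down the Artin invariant, and the uniqueness of the Artin invariant $1$ supersingular K3 surface to identify the constructed surface with $X(p)$. However, as a proof your text has a genuine gap: every step that carries actual content is deferred. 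You never exhibit a single Weierstrass equation, never carry out the character-sum computation showing $\rho(Y)=22$, never compute the discriminant of the Mordell--Weil lattice needed to conclude $\sigma(Y)=1$, and never verify irreducibility of the fibers; each of these is announced as something ``one verifies'' or ``one must compute''. Since the theorem is precisely the assertion that a surface meeting all these constraints simultaneously exists for every $p=11$ or $p>13$, a proposal that postulates a suitable equation without producing and checking one has not proved anything --- the existence of such an equation \emph{is} the theorem, and finding equations for which all the constraints can be verified is the hard part of \cite{Sh13}.

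There is also a structural flaw in the plan concerning the set of primes. You implicitly assume that one fixed Delsarte surface $Y$ will do the job for all $p=11$ and $p>13$, with $5$, $7$, $13$ excluded because ``the reduction type degenerates''. That is not how the mechanism can work. For a Delsarte surface covered by a Fermat-type surface of degree $m$, supersingularity is governed by a congruence condition on $p$ modulo $m$ (of the shape that each relevant character $a$ satisfies $p^{\nu}a\equiv -a \pmod m$ for some $\nu$); in particular, for $p\equiv 1\pmod m$ this condition fails for any character with $2a\not\equiv 0\pmod m$, so a single such surface is \emph{never} supersingular for the infinitely many primes $p\equiv 1\pmod m$. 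Consequently no one equation can cover all primes in the statement: one needs several fibrations whose congruence conditions jointly cover every $p=11$, $p>13$, and the exceptional primes $5$, $7$, $13$ are those escaping the constructions, not primes where a fixed smooth family acquires a reducible fiber. This covering argument is the arithmetic heart of Shioda's proof and is entirely absent from your proposal, while your guessed explanation of the exceptional primes points in the wrong direction.
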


This also explains the restriction  on $p$ in Theorem \ref{IntroThm1}. One may think that this restriction should not appear in Theorem~\ref{IntroThm1}.  More generally, it is likey that 
Theorem~\ref{IntroThm2},  Theorem~\ref{thm:BlancCantat} (mimicing \cite{BC13}) and 
Theorem~\ref{thm:StableSublattice} 
could have a larger range of applications. 
\medskip

We do not address in this note some questions of more arithmetical flavor. We know that the N\'eron-Severi group of $X(p)$ is defined over $\F_{p^2}$ (\cite{Sch12}). Over which field are those automorphisms of Theorem~\ref{IntroThm1} defined? This depends on the field of definition of the Mordell-Weil groups in Theorem~\ref{IntroShioda}.  Further we know that Salem numbers of bounded degree  are discrete. This raises the question  whether or not the minimal Salem number  of degree $22$  arises as the logarithm of the entropy of an automorphism on a supersingular K3 surface (see \cite{Mc11} and references there). 
We also know that powers of Salem numbers are Salem numbers. 
Given  a Salem number of degree $22$ which is the power of another Salem number of degree $22$,    and such that its logarithm   is the entropy of  an automorphism  $f$ on a supersingular K3 surface, when is  $f$ itself   the power of an automorphism?
Finally
it would be interesting to relate this work to  \cite{GMc02}, \cite{Mc02}  in which the authors show that any unramified degree 22 Salem number  is the logarithm of the entropy of an automorphism of a non-projective complex K3 surface. 
\\[1cm]

\noindent
{\bf Acknowledgement.}  \\[.2cm]
We thank  S. Cantat, B. Gross, and C. T. McMullen for discussions. The first author thanks the department of mathematics of Harvard University for hospitality, the third author thanks  T. Hibi for financial support during his stay at Osaka University.

\section{Preliminaries on K3 surfaces and liftings} \label{sec:notations}
\noindent 

In this section, we fix notations and recall basic facts on K3 surfaces and liftings from \cite{EO14} and references therein.

\subsection{K3 surfaces} \label{ss:k3}

Let $X$ be a K3 surface defined over an algebraically closed field $k$ of characteristic $p \ge 0$, that is, $X$ is a smooth projective surface defined over $k$ such that $H^1(X, \sO_X) = 0$ and the dualizing sheaf is trivial : $\omega_X \simeq \sO_X$. We denote by ${\rm NS}(X)$ the N\'eron-Severi group of $X$. Then the Picard group
${\rm Pic}\, (X) $ is isomorphic to  N\'eron-Severi group  ${\rm NS}\, (X)$, which is a free $\Z$-module of finite rank. The rank of ${\rm NS}\, (X)$ is called the Picard number of $X$ and is denoted by $\rho(X)$.  It is at least $1$ as $X$ is assumed to be projective, and at most $22$, the second $\ell$-adic Betti number. In characteristic $0$, it is at most $20$ by Hodge theory.
The intersection form $(*, **)$ on ${\rm NS}\, (X)$ is of signature $(1, \rho(X) -1)$ and $({\rm NS}\, (X), (*,**))$ is then an even  hyperbolic lattice. The dual $\Z$-module ${\rm NS}\, (X)^* := {\rm Hom}_{\Z}({\rm NS}\, (X), \Z)$ is regarded as a $\Z$-submodule of ${\rm NS}\, (X) \otimes \Q$, containing ${\rm NS}\, (X)$ through the intersection form $(*, **)$ which is non-degenerate. The quotient module ${\rm NS}\, (X)^*/{\rm NS}\, (X)$ is called the discriminant group of $X$.

The surface $X$ is called supersingular  if   $\rho(X) = 22$, the maximum possible value. 
(As the Tate conjecture is not yet  proven  for $p=2$, one should rather say  Shioda  supersingular in this case, but we 
consider only supersingular K3 surfaces in odd characteristic  in this note).

 Artin \cite{Ar74}  proved that the discriminant group ${\rm NS}\,(X)^*/{\rm NS}(X)$ of a supersingular K3 surface $X$ is $p$-elementary, more precisely,  as an abelian group
$${\rm NS}\,(X)^*/{\rm NS}\,(X) \simeq (\Z/p)^{2\sigma(X)}$$
where $\sigma(X)$ is an integer such that $1 \le \sigma(X) \le 10$. The integer $\sigma(X)$ is called the Artin invariant of $X$. Let $\sigma$ be an integer such that $1 \le \sigma \le 10$. Then the supersingular K3 surfaces over $k$ with   $\sigma(X) \le \sigma$   form $(\sigma -1)$-dimensional family over $k$. 
There is, up to isomorphism, a unique Artin invariant $1$ supersingular K3 surface $X(p)$  in each positive characteristic $p>0$. It is defined over $\F_p$ and its N\'eron-Severi group is defined over $\F_{p^2}$, not over $\F_p$ (\cite{Ogu79}, \cite{Sch12}). 
  In many senses, $X(p)$ are the most special K3 surfaces. The uniqueness of $X(p)$ in particular shows that $X(p) \simeq {\rm Km}\, (E \times_{\F_p} E)$ for any supersingular elliptic curve $E$ over $\F_p$ (\cite{Ogu79}, \cite{Sh75}). 

\subsection{Lifting of K3 surfaces}  \label{ss:lifting}
 Let $X$ be a K3 surface defined over  a perfect field $k$ of positive characteristic $p > 0$, and $R$ be a discrete valuation ring with residue field $k$ and field  of fractions  $K = {\rm Frac}\, (R)$ of characteristic $0$. We call a proper flat  (thus smooth) morphism  of schemes $X_R\to \Spec R$, which restricts to $X\to {\rm Spec}(k)$,
 a characteristic $0$ model of $X/k$.  The generic fiber $X_K= X_R \otimes_R K$  is a  K3 surface. For any field $L$ containing $K$, the K3 surface $X_L=X_K\otimes_K L$ is called a lift of $X$ over $L$. Characteristic $0$ lifts of $X$ are lifts over some $L$ as above. 
 They  are unobstructed (\cite{Del81}).
\subsection{Geometric lift of an automorphism of a K3 surface}(See \cite[Section~2.4]{EO14}).  \label{ss:liftingautomorphism}
Let $X$ be a K3 surface defined over a perfect field  $k$ of positive characteristic $p > 0$ 
and $X_R\to \Spec R$ be a characteristic $0$ model.
 Recall  (\cite[X,~App.]{SGA6}) that  one has a {\it specialization homomorphism }
 $sp: \Pic(X_{\bar K})\to \Pic(X)$ {\it on the Picard group},  which is defined by spreading out and restriction. It is injective as recognized in $\ell$-adic cohomology, on which the specializaion is an isomorphism (\cite[V,~Thm.~3.1]{SGA4.5}).

 One has a {\it restriction homomorphism  } ${\rm Aut}(X_R/R)\to {\rm Aut}(X)$. 
 One defines  the subgroup ${\rm Aut}^e (X_{\bar K}/{\bar K})\subset {\rm Aut}(X_{\bar K}/{\bar K})$ consisting of those automorphisms which lift to some model $X_R\to {\rm Spec} \ R$. (Here {\it e}  stands  for extendable).   The group law is defined by base change and the composition of automorphisms. Then the restriction homomorphism yields a {\it specialization homomorphism} 
$ \iota: {\rm Aut}^e(X_{\bar K}/\bar K)\to {\rm Aut}(X/k) .$
   Moreover, $sp$ is equivariant under $\iota$.
   In addition,  as automorphisms are recognized on the associated formal scheme, and $H^0(X, T_{X/k})=0$, the specialization homomorphism 
$\iota$ is injective (see \cite[~Lem.~2.3]{LM11}) .
 
 An automorphism   $f$ in ${\rm Aut}(X)$   {\it geometrically liftable to characteristic } $0$ if 
it is in the image of the specialization homomorphism $\iota$ for some model $X_R/R$.

\begin{theorem}(See \cite[Proof~of~Thm.6.4]{EO14}) \label{NoLiftPosEntropy} 
Let $X$ be a supersingular K3 surface and $f \in {\rm Aut}\, (X)$. Assume that 
the characteristic polynomial $f^*|{\rm NS}\, (X)$ is irreducible in $\Z[t]$. 
Then $f$ is never geometrically liftable to characteristic zero.
\end{theorem}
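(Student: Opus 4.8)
The plan is to argue by contradiction, exploiting the fact that an irreducible characteristic polynomial forces ${\rm NS}(X) \otimes \Q$ to be a simple $\Q[f^*]$-module, so it admits no proper nonzero $f^*$-invariant subspace; a geometric lift to characteristic $0$ would however produce exactly such a subspace, because the Picard number drops from $22$ to at most $20$ in characteristic zero.

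Concretely, suppose $f$ were geometrically liftable, so that $f = \iota(\tilde f)$ for some $\tilde f \in {\rm Aut}^e(X_{\bar K}/\bar K)$ attached to a characteristic $0$ model $X_R \to \Spec R$. Set $M := {\rm im}\big(sp : \Pic(X_{\bar K}) \to \Pic(X)\big)$. Using that $sp$ is equivariant under $\iota$, for $m = sp(L) \in M$ I get $f^*(m) = f^*\big(sp(L)\big) = sp\big(\tilde f^*(L)\big) \in M$, so $M$ is an $f^*$-stable subgroup of ${\rm NS}(X) = \Pic(X)$. Since $sp$ is injective, ${\rm rank}\,M = \rho(X_{\bar K})$; as $X_{\bar K}$ is a projective K3 surface in characteristic $0$ this gives $1 \le {\rm rank}\,M \le 20$ (the upper bound by Hodge theory), whereas $\rho(X) = 22$ because $X$ is supersingular. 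Hence $M \otimes \Q$ is a nonzero $f^*$-invariant subspace of ${\rm NS}(X) \otimes \Q$ of dimension at most $20 < 22$, in particular proper.

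To close, I would observe that the characteristic polynomial $p(t)$ of $f^*$ on ${\rm NS}(X)$ is monic of degree $22$ with integer coefficients and, being irreducible in $\Z[t]$, is irreducible in $\Q[t]$ by Gauss's lemma; therefore it equals the minimal polynomial of $f^*$, and ${\rm NS}(X) \otimes \Q \cong \Q[t]/\big(p(t)\big)$ is a simple $\Q[f^*]$-module with no proper nonzero invariant subspace. This contradicts the existence of $M \otimes \Q$, completing the argument. I expect the conceptual heart to be this last module-theoretic observation; the remaining steps are bookkeeping around the injectivity and $\iota$-equivariance of the specialization map $sp$ recalled above, the only mild care needed being the simultaneous verification that $M$ is nonzero (from $\rho(X_{\bar K}) \ge 1$) and proper (from the characteristic-zero Picard bound).
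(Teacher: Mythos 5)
Your proof is correct and follows essentially the same route as the paper: both argue by contradiction using the $\iota$-equivariance and injectivity of the specialization map together with the bound $\rho(X_{\bar K}) \le 20$ in characteristic $0$ against the irreducible degree-$22$ characteristic polynomial on ${\rm NS}(X)$. The only cosmetic difference is the final linear-algebra packaging — you phrase the contradiction via simplicity of ${\rm NS}(X)\otimes\Q$ as a $\Q[f^*]$-module, while the paper phrases it via the minimal polynomial of the lift dividing the irreducible minimal polynomial $m_f(t)$ of degree $22$ despite having degree at most $20$.
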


\begin{proof} If $f$ lifted  geometrically  to characteristic $0$, say to $g \in {\rm Aut}\, (X_{\bar K})$ under $X_R \to \Spec R$, then, as explained above, the specialization map $\iota: {\rm NS}(X_{\bar K})\hookrightarrow {\rm NS}(X)$ would be equivariant with respect to $g^*$ and $f^*$. In particular, the minimal monic polynomial $m_g(t) \in \Z[t]$ of $g^* | {\rm NS}(X_{\bar K})$, which has degree $\le 20$, would divide the minimal monic polynomial $m_f(t) \in \Z[t]$ of $f^* | {\rm NS}(X)$ in $\Z[t]$, which is irreducible and of degree $22$ by assumption, a contradiction.

\end{proof}

\section{Preliminaries on Salem numbers and entropy}  \label{ss:salem}
\label{sec:notations2}
\noindent 
In this section,   we
  recall the definition of entropy and Salem numbers, again from \cite{EO14} and  the references therein. 

\medskip

In what follows, $L = (\Z^{1+t} , (*,**))$  is a hyperbolic lattice, i.e., a pair  consisting of a free $\Z$-module of rank $1+t$ and a $\Z$-valued symmetric bilinear form $( \ , \ )$ of $\Z^{1+t}$ of signature 
$(1, t)$ with $t > 0$. For any ring $K$, one denotes the scalar extension of $L$ to $K$ by $L_K$.

We denote by ${\rm O}(L)$  the orthogonal group  of the quadratic lattice $L$. It is an algebraic group defined over $\Z$. 
  The determinant 
$$\det : {\rm O}(L) \to \{\pm 1\}\,\, $$
is a surjective homomorphism of  algebraic groups. Its kernel ${\rm SO}(L) \subset {\rm O}(L)$ 
is a closed index $2$ algebraic subgroup,  
 and is the  identity component  of ${\rm O}(L)$.  As an algebraic group, ${\rm SO}(L)$ is geometrically connected.
 Moreover, ${\rm SO}(L)(\R)$ is a connected  real Lie group, has index $2$ in
 the real Lie group ${\rm O}(L)(\R)$, thus is its identity component in the real topology.


The subspace 
$$P:=\{x \in L_{\R} |  \ (x^2) > 0\} \subset L_{\R}$$
consists of  two connected components $\pm C$ 
in the real topology. 
By continuity,  for $g \in {\rm O}(L)(\R)$, one has $g(C) = C$ or $g(C) = -C$, where the second case occurs.
Thus
$${\rm O}^{+}(L_{\R}) := \{ g \in {\rm O}(L)(\R) | \ g(C) = C \}
\,\,$$
is an index $2$  closed subgroup of the real Lie group  ${\rm O}(L)(\R)$. Thus 
${\rm O}^{+}(L_{\R})$ 
is  a  real Lie subgroup of ${\rm O}(L)(\R)$, which is not the subgroup of $\R$-points 
 of  an algebraic subgroup of ${\rm O}(L)$.

One defines 
$${\rm O}^{+}(L) := {\rm O}(L) (\Z) \cap {\rm O}^+(L_{\R})\,\, ,$$
$${\rm SO}^{+}(L) := {\rm O}(L) (\Z) \cap {\rm O}^+(L_{\R}) \cap {\rm SO}(L)(\R)\,\, .$$
The groups ${\rm O}^{+}(L)$ and ${\rm SO}^{+}(L)$ are abstract subgroups of ${\rm O}(L)(\Z)$ 
of index at most $4$.

\medskip

For application for K3 surfaces, we take $L$ to be the N\'eron-Severi lattice $ {\rm NS}\, (X)$ and $C$ to be  the connected component of $P$ containing the ample cone. Thus  ${\rm Aut}\,(X)^*$, the representation of ${\rm Aut}\, (X)$ on ${\rm NS}\, (X),$ lies in ${\rm O}^+({\rm NS}(X))$. Moreover,  ${\rm Aut}\, (X)^{*0} := {\rm Aut}\,(X)^* \cap {\rm SO}^{+}({\rm NS}\,(X))$  is a subgroup of ${\rm Aut}^*(X)$ of index at most $2$. 
\medskip

We call a polynomial $P(x) \in \Z [x]$ a {\it Salem polynomial} if it is irreducible, monic,  of even degree $2d \ge 2$ and the complex zeroes of $P(x)$ are of the form ($1 \le i \le d-1$):
$$a > 1\,\, ,\,\, 0 < a < 1\,\, ,\,\, \alpha_i, \overline{\alpha}_i \in S^1 := 
\{z \in \C\, \vert\, \vert z \vert = 1\} \setminus \{\pm 1\}\,\, .$$

\begin{proposition}\label{salem} Let $f \in {\rm O}^+(L)$. Then, the characteristic polynomial of $f$ is the product of cyclotomic polynomials and at most one Salem polynomial counted with multiplicities. 
\end{proposition}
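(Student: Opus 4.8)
The plan is to read off all the constraints on the eigenvalues of $f$ from the two structural hypotheses — that $f$ is an isometry of the form (which forces a reciprocal spectrum) and that the form has signature $(1,t)$ (which forces a one-dimensional bound on totally isotropic subspaces) — and then to invoke Kronecker's theorem to identify the unimodular part of the characteristic polynomial $\chi_f$ with a product of cyclotomic factors.

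First I would record that, since $f$ preserves $(*,**)$, its matrix satisfies $f^{\top}Gf=G$ for the Gram matrix $G$, so $f^{\top}$ is conjugate to $f^{-1}$ and the multiset of eigenvalues is stable under $\lambda\mapsto 1/\lambda$; it is of course also stable under complex conjugation because $\chi_f\in\Z[x]$. The central step is then to bound the eigenvalues of absolute value $>1$. For an eigenvalue $\lambda$ with generalized eigenspace $V_\lambda\subset L_{\C}$ and any $\mu$ with $\lambda\mu\neq 1$, the relation $(fv,fw)=(v,w)$ forces $V_\lambda\perp V_\mu$; taking $\mu=\lambda$ with $|\lambda|>1$ shows $V_\lambda$ is totally isotropic, and taking distinct $\lambda,\lambda'$ both of absolute value $>1$ (so $\lambda\lambda'\neq 1$) shows the corresponding spaces are mutually orthogonal. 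Hence $\bigoplus_{|\lambda|>1}V_\lambda$ is a single totally isotropic subspace of the complexified form, whose signature is still $(1,t)$; such a subspace has dimension at most $\min(1,t)=1$. Therefore there is at most one eigenvalue of absolute value $>1$, and it is simple.

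If no such eigenvalue exists, every eigenvalue lies on the unit circle, and since the minimal polynomial of each eigenvalue divides $\chi_f$ its conjugates are again eigenvalues, hence also unimodular; by Kronecker's theorem every eigenvalue is a root of unity and $\chi_f$ is a product of cyclotomic polynomials. If instead there is an eigenvalue $\lambda_0$ with $|\lambda_0|>1$, I would first use the hypothesis $f\in {\rm O}^+(L)$: the associated real eigenvector is a nonzero isotropic vector lying on the boundary of $\pm C$, and preservation of $C$ forces $\lambda_0>0$, hence $\lambda_0>1$, while its reciprocal $1/\lambda_0\in(0,1)$ is the unique eigenvalue of absolute value $<1$. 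Let $S$ be the minimal polynomial of $\lambda_0$. A rational eigenvalue would be an integer, yet $1/\lambda_0$ is not, so $\deg S\ge 2$ and $\lambda_0$ is irrational; comparing $|S(0)|$, which is the product of the absolute values of the roots of $S$, with the fact that $\lambda_0$ is the only root of absolute value $\neq 1$ unless $1/\lambda_0$ is also a root, forces $S$ to be self-reciprocal, so $1/\lambda_0$ is a root of $S$. Irreducibility of $S$ in degree $\ge 2$ rules out $\pm 1$ as roots, so the remaining roots lie in $S^1\setminus\{\pm 1\}$ in conjugate pairs: thus $S$ is a Salem polynomial, occurring in $\chi_f$ with multiplicity one because $\lambda_0$ is simple. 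Finally, any root of $\chi_f/S$ is unimodular with minimal polynomial coprime to $S$, so all its conjugates are unimodular roots of $\chi_f/S$; Kronecker again makes them roots of unity, whence $\chi_f/S$ is a product of cyclotomic polynomials, completing the factorization.

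The main obstacle I expect is making the dimension count fully rigorous at the level of generalized (rather than ordinary) eigenspaces while tracking multiplicities, together with the positivity input $\lambda_0>0$ — which is precisely where the refinement from ${\rm O}(L)$ to ${\rm O}^+(L)$ enters. Without the cone-orientation hypothesis one only obtains $\lambda_0^2>1$, and the dominant real eigenvalue could a priori be negative, in which case $S$ would fail to be a genuine Salem polynomial. The orthogonality-of-eigenspaces lemma and the Kronecker step are routine, so the real content is the signature bound combined with this cone argument.
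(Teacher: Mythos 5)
The paper does not actually prove this proposition: its ``proof'' is a citation (``this is well-known, see \cite{Mc02}, \cite{Og10}''). Your proposal therefore supplies genuinely more content, and its architecture --- reciprocal, conjugation-stable spectrum from $f^{\top}Gf=G$; orthogonality of generalized eigenspaces $V_\lambda\perp V_\mu$ when $\lambda\mu\neq 1$; a dimension bound on the span of the expanding eigenspaces; Kronecker's theorem for the unimodular part; and the cone condition to force the leading eigenvalue to be positive real --- is exactly the standard argument behind those references. Your observation that the hypothesis $f\in{\rm O}^+(L)$ (rather than merely $f\in{\rm O}(L)(\Z)$) is what excludes a leading eigenvalue $-a<-1$, whose minimal polynomial would not be a Salem polynomial in the paper's sense, is correct and worth making explicit.

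There is, however, one step whose stated justification is false, and it is the crux of the proof. You bound $\dim\bigl(\bigoplus_{|\lambda|>1}V_\lambda\bigr)$ by saying that a totally isotropic subspace of the \emph{complexified} form, ``whose signature is still $(1,t)$,'' has dimension at most $\min(1,t)=1$. Over $\C$ signature is meaningless and this bound fails: already the complexification of a rank-two negative definite form contains the isotropic line spanned by $(1,\sqrt{-1})$, and in general a nondegenerate complex form of rank $n$ has isotropic subspaces of dimension $\lfloor n/2\rfloor$, which exceeds $1$ once $t\ge 3$. The bound $\min(p,q)$ is a statement about \emph{real} subspaces of a real form. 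The repair is a one-liner using facts you already recorded: since the set of eigenvalues with $|\lambda|>1$ is stable under complex conjugation and $f$ is real, the subspace $W:=\bigoplus_{|\lambda|>1}V_\lambda$ satisfies $\overline{W}=W$, hence $W=(W\cap L_{\R})\otimes_{\R}\C$, and $W\cap L_{\R}$ is a real totally isotropic subspace of the signature-$(1,t)$ form, so $\dim_{\C}W=\dim_{\R}(W\cap L_{\R})\le 1$. A second, smaller point should also be spelled out: why no root $\beta$ of $\chi_f/S$ can have minimal polynomial $S$. If it did, then $S$ would divide $\chi_f/S$ in $\Z[x]$ (as $\chi_f/S$ is a monic integer polynomial with $\beta$ as a root), so $S^2\mid\chi_f$ and $\lambda_0$ would not be simple, contradicting what you proved; this is what licenses applying Kronecker to every root of $\chi_f/S$. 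With these two repairs your proof is complete and correct.
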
    
\begin{proof} As mentioned in \cite[Prop.~3.1]{EO14}, this is well-known. See 
\cite{Mc02}, \cite{Og10}.
\end{proof} 
\begin{definition}\label{entropy}  \begin{itemize}
\item[i)]
 For $f$ as in Proposition~\ref{salem}, we define the entropy 
$h(f)$ 
of $f$ by
$$h(f) = \log ( r(f) ) \ge 0\,\, ,$$
where $r(f)$ is the spectral radius of $f$, that is the maximum of the absolute values of the  complex
 eigenvalues  of $f$ acting on $L$. 
\item[ii)]
For a smooth projective surface $S$ and an automorphism $f$ on it,  one defines the entropy   $h(f)$  by
$$h(f) = \log r(f^* | {\rm NS}\, (S)) $$ 
where $f^*$ is the action on ${\rm NS}(S)$ induced by $f$. 
 \end{itemize}
\end{definition}
This definition is consistent to the topological entropy of automorphisms of smooth complex projective surfaces (\cite{ES13}).
\section{Two observations from group theory} 
\label{sec:grouptheory}
\noindent
In this section, we shall prove  Theorem~\ref{thm:BlancCantat}, relying on \cite{BH04} and \cite{BC13}, and   Theorem~\ref{thm:StableSublattice}, relying on \cite{Og09}. They are  crucial for our main theorems \ref{IntroThm2} and \ref{IntroThm1}.  
\begin{thm} \label{thm:BlancCantat}
Let $L$ be a hyperbolic lattice of even  rank $2d$ and $G \subset {\rm SO}^{+}(L)$ be  a subgroup. Assume that $G$ has no $G$-stable $\R$-linear subspace of $L_{\R}$ other than $\{0\}$ and $L_{\R}$. Then there is  an element $g \in G$, the  characteristic polynomial of which  is a Salem polynomial of degree $2d$. 
\end{thm}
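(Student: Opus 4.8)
The statement we must prove is Theorem~\ref{thm:BlancCantat}: given a hyperbolic lattice $L$ of even rank $2d$ and a subgroup $G \subset {\rm SO}^+(L)$ acting $\R$-irreducibly on $L_\R$ (no nontrivial proper $G$-stable subspace), there exists $g \in G$ whose characteristic polynomial is a Salem polynomial of degree $2d$.

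Let me think about what tools are available and what the structure of the argument should be.

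The key constraint is that $G \subset {\rm SO}^+(L)$, so by Proposition~\ref{salem}, every element $g \in G$ has characteristic polynomial equal to a product of cyclotomic polynomials times *at most one* Salem polynomial (with multiplicities). The goal is to find a single $g$ whose characteristic polynomial is *exactly* one Salem polynomial of full degree $2d$ — meaning no cyclotomic factors at all, and the Salem factor is irreducible of degree $2d$.

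The irreducibility condition on the $G$-action is clearly the crucial hypothesis. Let me think about how irreducibility forces the existence of such an element.

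The paper says this relies on Bedford–Kim / Blanc–Cantat (\cite{BH04}, \cite{BC13}). Let me recall what kind of argument appears there. The strategy in Blanc–Cantat for such statements is typically:

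1. **Use that $G$ acts on hyperbolic space.** Since $L$ is hyperbolic of signature $(1, 2d-1)$, the group ${\rm O}^+(L_\R)$ acts on the hyperbolic space $\mathbb{H}^{2d-1}$ (the projectivization of the positive cone). Elements of ${\rm O}^+(L)$ are classified as elliptic, parabolic, or hyperbolic (loxodromic) according to their action. An element $g$ is hyperbolic/loxodromic iff its spectral radius $r(g) > 1$, i.e. iff it has an eigenvalue of absolute value $> 1$ — equivalently, iff its characteristic polynomial has a nontrivial Salem factor (spectral radius $> 1$).

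2. **Irreducibility forces a loxodromic element to exist.** A group $G$ acting on hyperbolic space whose only stable subspaces are trivial cannot consist entirely of elliptic/parabolic elements. More precisely, if every element of $G$ were elliptic (finite order on the lattice, fixing a point in $\mathbb{H}$) or the group preserved a point/horoball/geodesic, this would contradict $\R$-irreducibility. So there must exist a loxodromic $g_0 \in G$, whose char poly has a Salem factor $S_0$ of some even degree $2e$ with $1 \le e \le d$.

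3. **Upgrade to full degree via irreducibility.** This is where the real content lies. A loxodromic $g_0$ gives a Salem factor, but it may have degree $< 2d$, with extra cyclotomic factors. The characteristic subspaces decompose $L_\R$. The irreducibility hypothesis must be leveraged to "spread out" the loxodromic behavior across all of $L_\R$: one combines $g_0$ with other elements of $G$ (which move the attracting/repelling directions of $g_0$ around, by irreducibility) to build an element whose Salem factor fills the entire space.

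**The plan I would follow.** I would argue in three stages, matching the trichotomy above.

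First, I would establish that $G$ contains a loxodromic element. Suppose not: then every $g \in G$ has spectral radius $1$, so (by Proposition~\ref{salem}) every characteristic polynomial is a product of cyclotomic polynomials, and every $g$ has finite order or is parabolic. A group of isometries of $\mathbb{H}^{2d-1}$ with no loxodromic element either fixes a point of $\overline{\mathbb{H}}$ (the closure), or preserves a geodesic, or is a bounded (hence relatively compact, fixing an interior point) subgroup — in each case producing a $G$-invariant proper nonzero subspace of $L_\R$ (the time-like span of a fixed interior point, the line/plane spanned by fixed boundary points, etc.), contradicting irreducibility. This is the step where I invoke the Tits-alternative-type / classification results from \cite{BH04}, \cite{BC13}.

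Second, having a loxodromic $g_0$ with Salem factor of degree $2e$, let $V_0 \subset L_\R$ be the (real) span of the eigenspaces for the eigenvalues of absolute value $\ne 1$ — this is a $2$-dimensional (or $2e$-dimensional) "loxodromic part". The attracting and repelling fixed points $\xi^+, \xi^-$ of $g_0$ on the boundary sphere are determined by the top/bottom eigenvalues. By irreducibility, the $G$-orbit of $\xi^+$ is not contained in any proper subspace; so I can find $h_1, \dots, h_m \in G$ such that the points $h_i \xi^+$ together span $L_\R$.

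Third — and this is the main obstacle — I would use a ping-pong / north-south dynamics argument: replacing $g_0$ by a high power $g_0^N$ makes its attracting/repelling dynamics arbitrarily strong, and products $g_0^N \cdot h_i g_0^{N} h_i^{-1} \cdots$ of conjugates pointing in the spanning directions yield an element $g$ whose dynamics is loxodromic with attracting/repelling data filling $L_\R$. The delicate part is arranging that the resulting $g$ has *irreducible* Salem factor of the *full* degree $2d$ with *no* cyclotomic factors — i.e. that all $2d-2$ of the remaining eigenvalues are forced onto the unit circle as conjugates $\alpha_i, \bar\alpha_i$ of a single Salem number, rather than splitting off cyclotomic roots of unity.

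**The crux.** I expect the hard part to be precisely this last step: guaranteeing *maximality and irreducibility* of the Salem factor. Producing *some* loxodromic element (hence *some* Salem factor) is relatively soft and follows from the classification of isometry groups of hyperbolic space together with the irreducibility hypothesis. But ruling out cyclotomic factors in the final element — equivalently, showing the Salem factor has the full degree $2d$ — requires controlling the eigenvalues on the unit circle. The natural mechanism is an openness/density argument: the set of $g \in {\rm O}^+(L)(\R)$ with an eigenvalue of modulus $> 1$ and with all unit-modulus eigenvalues distinct is open and dense in the identity component, and the loxodromic elements constructed by ping-pong in an $\R$-irreducible $G$ are "generic enough" to lie in it; combined with the integrality of $g$ (forcing the char poly into $\Z[t]$) and Proposition~\ref{salem}, an element with a simple dominant eigenvalue and no eigenvalue being a root of unity must have its entire char poly equal to a single Salem polynomial of degree $2d$. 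The careful version of this genericity, as carried out in \cite{BC13}, is what the proof will cite and adapt.
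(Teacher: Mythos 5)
Your proposal correctly isolates the crux --- producing an element of $G$ whose characteristic polynomial has \emph{no} cyclotomic factor at all --- but it does not supply a mechanism that can close it, and the one you sketch would fail. The ``openness/density'' argument you invoke is in the real topology, where it is useless: $G$ is a subgroup of ${\rm SO}(L)(\Z)$, hence countable and discrete in the Lie group ${\rm SO}(L)(\R)$, and a discrete subgroup can perfectly well avoid an open dense subset. Likewise, the ping-pong/north--south dynamics step controls only the attracting and repelling eigendirections; whether the remaining $2d-2$ eigenvalues on the unit circle are roots of unity is an arithmetic condition on which the dynamical construction gives no purchase (and your intermediate genericity condition, ``all unit-modulus eigenvalues distinct,'' does not even rule this out: distinct roots of unity still produce cyclotomic factors). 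What actually closes the gap --- this is the paper's proof, and it is also what happens on p.~15 of \cite{BC13}, to which you defer --- is \emph{Zariski} density: by the theorem of Benoist--de la Harpe (Theorem~\ref{thm:delaHarpe}; this is what \cite{BH04} is used for, not a classification of elliptic/parabolic/loxodromic isometry groups), the irreducibility hypothesis forces the Zariski closure of $G$ in ${\rm SO}(L_\R)$ to be all of ${\rm SO}(L_\R)$. Since there are only finitely many cyclotomic polynomials of degree $\le 2d$, the locus of $h \in {\rm SO}(L)$ whose characteristic polynomial is divisible by one of them is a proper Zariski-closed subset (pull back the finitely many divisibility loci under the characteristic-polynomial morphism ${\rm SO}(L) \to \A^{2d}$); its nonempty open complement must therefore meet the Zariski-dense subgroup $G$, and Proposition~\ref{salem} converts ``no cyclotomic factor'' into ``Salem polynomial of degree $2d$.''

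Note moreover that the nonemptiness of that open set is exactly where the evenness of the rank, which your proposal never uses, becomes essential: if $L$ has odd rank, then for any $M$ representing an element of ${\rm SO}(L)(\R)$ the identity $^tMQM=Q$ forces $\det(M-{\rm Id})=-\det(M-{\rm Id})=0$, so $1$ is always an eigenvalue and the ``good'' set is empty. In even rank the paper exhibits an explicit element $M = P \oplus R_2 \oplus \cdots \oplus R_d \in {\rm SO}(L)(\R)$ (one hyperbolic block and $d-1$ irrational rotation blocks) with no root-of-unity eigenvalue (Proposition~\ref{prop:evenness}). Your first step --- establishing the existence of a loxodromic element --- is, incidentally, not needed in this scheme: any element avoiding all the cyclotomic divisibility loci is automatically loxodromic, since its characteristic polynomial is then a Salem polynomial and has a root $a>1$.
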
 

 The proof below  mimics \cite[p.~15]{BC13}, where the authors handle the case of $L={\rm NS}(X(2))$,  and deduces the statement from \cite[Prop.~1]{BH04}. 
We slightly clarify  their argument to make it fit with  Theorem~\ref{thm:BlancCantat}.

\begin{proof} 
Recall  \cite[Prop.~1]{BH04}, in which neither the evenness of the rank $L$ nor $G \subset {\rm SO}^{+}(L)$ are necessary assumptions:

\begin{thm} \label{thm:delaHarpe}
Let $G \subset {\rm O}(L)(\R)$ be an abstract  subgroup. Assume that $G$ has no $G$-stable $\R$-linear subspace of $L_{\R}$ other than $\{0\}$ and $L_{\R}$. Then the Zariski closure of $G$ in ${\rm O}(L_{\R})$ contains  ${\rm SO}(L_{\R})$.  In particular, if  in addition  $G\subset {\rm SO}(L)(\R)$, then its Zariksi closure in ${\rm SO}(L_{\R})$ is 
${\rm SO}(L_{\R})$.
\end{thm}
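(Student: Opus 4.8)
The plan is to replace $G$ by its Zariski closure and to reduce everything to a statement about Lie algebras. Write $V = L_{\R}$ and let $H \subseteq {\rm O}(L_{\R})$ be the Zariski closure of $G$. For a fixed linear subspace $W \subseteq V$ the condition $gW \subseteq W$ is Zariski closed in $g$, so $H$ and $G$ have exactly the same invariant subspaces; hence $H$ acts irreducibly on $V$. Let $H^0$ be the identity component, a connected normal subgroup of finite index with ${\rm Lie}\,H^0 = {\rm Lie}\,H \subseteq {\rm Lie}\,{\rm SO}(L_{\R})$, and note that $\det(H^0)=1$, so $H^0 \subseteq {\rm SO}(L_{\R})$. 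Let $S := {\rm SO}(L_{\R})(\R)^0$ denote the topological identity component of the real points; it is a connected real Lie group, Zariski dense in the connected algebraic group ${\rm SO}(L_{\R})$. Thus it suffices to prove $H^0 = S$, equivalently ${\rm Lie}\,H = {\rm Lie}\,{\rm SO}(L_{\R})$; the final clause then follows at once, since for $G \subseteq {\rm SO}(L_{\R})(\R)$ one has $H \subseteq {\rm SO}(L_{\R})$, forcing equality.

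First I would check that $H^0 \neq \{1\}$. Any finite, indeed any relatively compact, subgroup of ${\rm O}(L_{\R})$ is conjugate into the maximal compact subgroup ${\rm O}(1)\times{\rm O}(t)$ and therefore fixes a vector spanning a positive line; as such a line is a proper nonzero invariant subspace once $1+t \geq 2$, irreducibility rules this out, so $G$ is infinite and $\dim H^0 \geq 1$. Next I would feed irreducibility of $H$ into Clifford theory for the finite-index normal subgroup $H^0$: if $W$ is a nonzero irreducible $H^0$-submodule, then each translate $hW$ $(h \in H)$ is again $H^0$-invariant, and the finite sum $\sum_{h} hW$ is a nonzero $H$-invariant subspace, hence all of $V$. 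Thus $V$ is a semisimple $H^0$-module, which makes ${\rm Lie}\,H$ a reductive subalgebra of ${\rm Lie}\,{\rm SO}(L_{\R})$ preserving the nondegenerate form $B$ of signature $(1,t)$.

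The heart of the matter is to promote this to ${\rm Lie}\,H = {\rm Lie}\,{\rm SO}(L_{\R})$, and here the Lorentzian signature and the assumption $t \geq 2$ are essential. I would first show that $H^0$ itself acts irreducibly. A maximal compact subgroup $K \subseteq H^0$ preserves both $B$ and an averaged positive-definite form, hence fixes a positive line; on the other hand $H$ permutes the $H^0$-isotypic components of $V$ transitively and compatibly with $B$. Because the positive index of $B$ is exactly $1$, a short signature computation shows that neither a transitive permutation of two or more isometric blocks, nor a dual pairing of isotropic blocks, nor a multiplicity greater than one, is compatible with this extremal value; so there is a single isotypic component of multiplicity one and $H^0$ is irreducible. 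Then its center acts by scalars in $\{\pm 1\}$, so $H^0$ is semisimple and noncompact. Finally, ${\rm SO}(L_{\R}) \cong {\rm SO}(1,t)$ is a simple group of real rank one for $t \geq 2$, so a proper semisimple subgroup acting irreducibly would again be noncompact of real rank one, i.e.\ locally one of ${\rm SO}(1,s)$, ${\rm SU}(1,s)$, ${\rm Sp}(1,s)$, $F_4^{-20}$. The last three have invariant symmetric forms of positive index at least two, incompatible with signature $(1,t)$, while ${\rm SO}(1,s)$ with $s<t$ leaves a proper nondegenerate subspace invariant; either way irreducibility with positive index one is impossible unless $H^0 = S$. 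Hence ${\rm Lie}\,H = {\rm Lie}\,{\rm SO}(L_{\R})$ and, since $H$ is Zariski closed and contains the Zariski-dense $S$, $H \supseteq {\rm SO}(L_{\R})$.

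The step I expect to be the main obstacle is this last identification of the connected component. It divides into the signature bookkeeping that excludes imprimitivity and higher multiplicity --- which genuinely uses that the positive index equals $1$, i.e.\ the hyperbolic hypothesis --- and the structural input that ${\rm SO}(1,t)$ has real rank one, so that its proper semisimple subgroups must stabilise a proper nondegenerate subspace. A further subtlety, already emphasised in the preliminaries, is that one must take the algebraic Zariski closure rather than the real closure: $S$ is only one of the two topological components of ${\rm SO}(L_{\R})(\R)$, yet it is Zariski dense in the connected algebraic group ${\rm SO}(L_{\R})$, which is precisely what upgrades $H^0 = S$ to $H \supseteq {\rm SO}(L_{\R})$. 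For the degenerate case $t=1$, where ${\rm SO}(1,1)$ is one-dimensional and abelian, one argues directly that an irreducible $G$ must meet ${\rm SO}(1,1)$ in an infinite, hence Zariski dense, subgroup.
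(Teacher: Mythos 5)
The paper offers no argument for this theorem at all: it is quoted verbatim as \cite[Prop.~1]{BH04} (Benoist--de la Harpe), so your proposal is an attempt to reprove the cited black box. Much of your architecture is correct and standard: the Zariski closure $H$ has exactly the same invariant subspaces as $G$, hence acts irreducibly; irreducibility forbids $G$ from being relatively compact (a compact closure would preserve a positive line), so $\dim H^0 \ge 1$; Clifford theory for the normal subgroup $H^0$ makes $V$ semisimple; the isotypic/signature bookkeeping for $t \ge 2$ (no isotropic dual pairs, no transitive permutation of $\ge 2$ isometric blocks, multiplicity one) does force $H^0$ to act irreducibly, provided you also rule out complex and quaternionic commutants and compact almost-direct factors, which the same kind of computation handles; and your separate treatment of $t=1$ is both necessary (there $H^0$ need not be irreducible) and correct.

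The genuine gap is in the final classification step, which is where the entire content of the theorem lives. You must show that no \emph{proper} connected noncompact semisimple subgroup of ${\rm SO}(1,t)$ acts irreducibly on $\R^{1+t}$, but your exclusions only address the \emph{standard} representations of the candidate rank-one groups. Saying that ${\rm SU}(1,s)$, ${\rm Sp}(1,s)$, $F_4^{-20}$ ``have invariant symmetric forms of positive index at least two'' refers to their defining representations, and saying that ${\rm SO}(1,s)$ with $s<t$ ``leaves a proper nondegenerate subspace invariant'' is true only for the block-diagonal embedding --- which is reducible and therefore never the case at issue. The case that must be excluded, and that your argument never touches, is a rank-one group embedded via a \emph{nonstandard irreducible} representation, which leaves no invariant subspace whatsoever. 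Concretely, ${\rm SL}_2(\R)$, locally isomorphic to ${\rm SO}(1,2)$, acts irreducibly on ${\rm Sym}^{2k}(\R^2)$ preserving a symmetric form; as written, your dichotomy would ``rule out'' this subgroup of the orthogonal group of that form, yet it exists --- it is only saved from contradicting the theorem because the invariant form has signature $(k,k+1)$ rather than Lorentzian signature, and verifying that fact (and its analogue for every self-dual irreducible representation of every rank-one group) is precisely the computation your proposal omits. What is needed is the nontrivial statement that every connected Lie subgroup of ${\rm SO}(1,t)$ acting irreducibly on $\R^{1+t}$ equals ${\rm SO}(1,t)^0$ (a theorem of Di Scala--Olmos, and the essential content of \cite[Prop.~1]{BH04}); one route is to note that $V$ must contain a nonzero vector fixed by a maximal compact subgroup of $H^0$ and then analyze signatures of invariant forms on spherical representations, but some such argument must actually be supplied. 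Without it, the last step assumes what it is meant to prove.
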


As is well known, if $L$ is any non-degenerate quadratic lattice, and $g \in {\rm SO}(L)(\Z)$, then if the rank of $L$ is odd, $1$ is an eigenvalue of $g$.  Indeed, if $Q, M$ are the matrices of the form and $g$ in a chosen basis, then 
$^tMQ(M-{\rm Id})=({\rm Id} -^tM)Q$ thus ${\rm det}(M-{\rm Id})=-{\rm det}(M-{\rm Id}) \in \Z$. We use now that in Theorem~\ref{thm:BlancCantat}, $L$ is even:

\begin{proposition} \label{prop:evenness}
There is an element $g \in {\rm SO}(L)(\R)$ such that no eigenvalue of $g$ is a root of unity. 
\end{proposition}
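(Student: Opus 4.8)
The plan is to construct the desired $g$ by hand as a block-diagonal isometry, using in an essential way that the rank $2d$ is even. First I would fix an orthogonal decomposition of the real quadratic space $L_{\R}$, which has signature $(1, 2d-1)$, as $L_{\R} = H \perp N$, where $H$ is a hyperbolic plane of signature $(1,1)$ and $N$ is negative definite of dimension $2d-2$. Such a splitting exists since the signature is $(1,2d-1)$ with $2d-1 \ge 1$: one peels off one positive and one negative direction to form $H$, and the orthogonal complement $N$ is then negative definite of even dimension $2d-2$. The element $g$ will act independently on the two factors, so that $g \in {\rm SO}(L)(\R) = {\rm SO}(L_{\R})$ as soon as its restriction to each factor is an isometry of determinant $1$.

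On $H$ I would take a hyperbolic boost $B$: in a basis $e, f$ of $H$ with $(e,e) = (f,f) = 0$ and $(e,f)=1$, put $B(e) = a\,e$ and $B(f) = a^{-1} f$ for some real $a > 1$. Then $B$ is an isometry of $H$, $\det B = 1$, and its eigenvalues $a, a^{-1}$ have absolute value $\neq 1$, hence are not roots of unity. On $N$, which is negative definite of the even dimension $2d-2$, I would choose an orthonormal basis and let $R$ be the rotation that is block-diagonal with $d-1$ planar rotation blocks of angles $\theta_1, \dots, \theta_{d-1}$, chosen so that each $\theta_j/\pi$ is irrational. Then $R$ is an isometry of $N$ with $\det R = 1$, and its eigenvalues $e^{\pm i\theta_j}$ are none of them roots of unity. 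Setting $g := B \oplus R$ gives an element of ${\rm SO}(L)(\R)$ whose $2d$ eigenvalues $a, a^{-1}, e^{\pm i\theta_1}, \dots, e^{\pm i\theta_{d-1}}$ are all non-roots of unity, as required.

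The single point requiring care, rather than a genuine obstacle, is the parity bookkeeping, which is exactly where the evenness of the rank enters. Because $2d$ is even, $\dim N = 2d-2$ is also even, so $N$ can be tiled \emph{entirely} by two-dimensional rotation blocks with no leftover line. Had there been a leftover odd-dimensional orthogonal factor, any determinant-one isometry of it would be forced to fix a vector and thus have the eigenvalue $1$ (this is precisely the odd-rank phenomenon recalled immediately before the statement), producing a root-of-unity eigenvalue and defeating the construction. I therefore expect the only thing to verify is that the decomposition $L_{\R} = H \perp N$ and the determinants behave as claimed; the mathematical content is entirely this use of even rank to eliminate a forced eigenvalue on the elliptic part.
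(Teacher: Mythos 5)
Your proof is correct and takes essentially the same approach as the paper: there, $g$ is also built as a block-diagonal isometry $M = P \oplus R_2 \oplus \cdots \oplus R_d$, with $P$ a hyperbolic boost on the signature-$(1,1)$ part (eigenvalues $\sqrt{2}\pm 1$, i.e.\ your $a, a^{-1}$ written in an orthogonal rather than isotropic basis) and the $R_i$ rotations by the angle $2\pi\sqrt{2}$ on the negative definite planes. The role of even rank is exactly as you describe: the negative definite complement has even dimension and is tiled entirely by rotation blocks, leaving no odd factor that would force the eigenvalue $1$.
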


\begin{proof}
We may choose a real basis 
$$\langle v_1, v_2, v_3, \cdots, v_{2d} \rangle$$ 
of $L_{\R}$ under which the bilinear form $(*,**)$ is represented by the matrix $Q := (1, -1, -1, \cdots, -1)$. We identify $\R$-linear maps and $2d \times 2d$-matrices with real entries via this basis. 
Consider the $\R$-linear map of $L_{\R}$ given by the matrix 
$$M = P \oplus R_2 \oplus \cdots \oplus R_d$$
where 
$$P= \left(\begin{array}{rr}
\sqrt{2} & 1\\
1 & {\sqrt{2}} \end{array} \right)\,\, ,\,\, 
R_i = \left(\begin{array}{rr}
\cos 2\pi\sqrt{2} & -\sin 2\pi\sqrt{2}\\
\sin 2\pi\sqrt{2} & \cos 2\pi\sqrt{2} 
\end{array} \right)\,\, ,$$
for all $2 \le i \le d$. Then $^{t}MQM = Q$, $\det\, M = 1$  
and the complex eigenvalues of $M$ are 
$$\sqrt{2} \pm 1\,\, ,\,\, e^{\pm 2\pi\sqrt{-1}\cdot\sqrt{2}} = \cos (2\pi\sqrt{2}) \pm \sqrt{-1}\sin (2\pi\sqrt{2})\,\, .$$
Note that $e^{\pm 2\pi\sqrt{-1}\cdot s}$ ($s \in \R$) is a root of unity if and only if $s$ is rational. Since $\sqrt{2}$ is an irrational real number, it follows that no eigenvalue of $M$ is root of unity. Thus $M$ satisfies all the requirements.
\end{proof}

Now $G \subset {\rm SO}^{+}(L)$. The argument  now closely follows \cite[p.~15]{BC13}.  

\begin{lemma} \label{lem:cyclotomic}
There are  finitely many cyclotomic polynomials of degree $\le 2d$.
\end{lemma}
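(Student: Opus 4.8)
The plan is to parametrize cyclotomic polynomials by their index and convert the degree bound into a condition on Euler's totient function. Recall that every cyclotomic polynomial is $\Phi_n(x)$ for a unique $n \ge 1$, where $\Phi_n$ is the minimal monic polynomial of a primitive $n$-th root of unity, and that $\deg \Phi_n = \phi(n)$, with $\phi$ the Euler totient function. Since distinct indices yield distinct polynomials, the map $n \mapsto \Phi_n$ is a bijection from $\{n \ge 1\}$ onto the set of cyclotomic polynomials. Hence the assertion is equivalent to the finiteness of the set $\{\, n \ge 1 : \phi(n) \le 2d \,\}$, and the whole lemma reduces to the classical fact that $\phi(n) \to \infty$ as $n \to \infty$.

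To make this quantitative I would use the elementary lower bound $\phi(n) \ge \sqrt{n/2}$, valid for all $n \ge 1$. Granting it, $\phi(n) \le 2d$ forces $n \le 8d^2$, so only finitely many indices $n$ satisfy the degree condition, and therefore only finitely many cyclotomic polynomials have degree at most $2d$. This finishes the proof.

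For self-containedness I would derive the bound $\phi(n) \ge \sqrt{n/2}$ from multiplicativity: set $g(n) := \phi(n)/\sqrt{n}$, which is multiplicative since $\phi$ is multiplicative and $n \mapsto \sqrt{n}$ is completely multiplicative, so it suffices to evaluate $g$ on prime powers. For an odd prime $p$ one has $g(p^e) = p^{e/2 - 1}(p-1) \ge (p-1)/\sqrt{p} \ge 2/\sqrt{3} > 1$, while for $p = 2$ one has $g(2^e) = 2^{e/2 - 1} \ge 1/\sqrt{2}$. Every prime-power factor thus contributes at least $1$, except possibly the factor at $2$, which is at least $1/\sqrt{2}$; multiplying over all prime powers dividing $n$ gives $g(n) \ge 1/\sqrt{2}$, that is, the claimed inequality.

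There is essentially no obstacle here: the statement is elementary, and the only substantive input is the growth of the totient function. The single point demanding minor care is the prime-power inequality for the smallest cases, namely the prime power $2$ (and, for the odd primes, $p = 3$), since this is exactly where the constant $1/\sqrt{2}$ is pinned down; all larger primes and all higher exponents only improve the estimate.
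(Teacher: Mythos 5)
Your proof is correct, but it follows a genuinely different route from the paper's. The paper's proof is a one-liner: it counts the possible \emph{roots} of cyclotomic polynomials of degree $\le 2d$, asserting that they all satisfy $x^{(2d)!}=1$, so that there are at most $(2d)!$ of them; since distinct cyclotomic polynomials have disjoint root sets, this bounds the number of such polynomials. You instead bound the possible \emph{indices}: since $\deg \Phi_n = \phi(n)$ and $\phi(n) \ge \sqrt{n/2}$, the degree condition forces $n \le 8d^2$. Your version is quantitatively sharper ($8d^2$ versus $(2d)!$) and, more importantly, it is rigorous as written, whereas the paper's one-liner is actually imprecise: a primitive $n$-th root of unity with $\phi(n) \le 2d$ need not satisfy $x^{(2d)!}=1$. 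Indeed, if $n = 2d+1$ is prime then $\phi(n) = 2d$ but $n \nmid (2d)!$; this occurs already in the paper's own case of interest, $d = 11$, $n = 23$. The paper's counting idea is easily repaired — for instance, by your bound $n \le 8d^2$ every relevant root satisfies $x^{(8d^2)!}=1$ — but as stated the repair needs exactly the kind of totient estimate you supply, so your argument is the more complete of the two.
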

\begin{proof}
This is because the number of complex numbers with $x^{(2d)!} = 1$ is at most $(2d)!$. 
\end{proof}

Let $P_{2d} \subset \Z [t]$ be the set of monic polynomials of degree $2d$. Then $P_{2d}$  is identified 
with  the affine variety $\A^{2d}$  defined over $\Z$.
The map  
$${\rm char} : {\rm SO}(L) \to P_{2d}\,\, ,\,\, h \mapsto \Phi_{h}(t) := {\rm det} (tI_{2d} -h)\,\, $$
is a morphism of affine varieties. 
Let $$u_1(t) = t-1, u_2(t) := t+1, \cdots, u_{N}(t)\,\, $$  be the cyclotomic polynomials in $\Z[t]$ of degree $\le 2d$, 
where $N$ is the cardinarity of the cyclotomic polynomials of degree $\le 2d$ (Lemma~\ref{lem:cyclotomic}). 
The subsets
$$P_i := \{p(t) \in P_{2d}(\C)\, | \  u_i(t) | p(t) \} $$
define
 proper closed algebraic subvarieties of $P_{2d}\otimes_{\Z} \Q$, thus so is their finite union 
$$Q_{2d} := \cup_{i=1}^{N} P_i \subset P_{2d}  \otimes_{\Z} \Q  \, .$$

Let $g \in G$. Its characteristic polynomial $ \Phi_g(t) \in \Z[t]$  is monic and of degree $2d$. By Proposition~\ref{salem}, $\Phi_g(t)$ is the product of cyclotomic polynomials and of at most one Salem polynomial counted with multiplicities. Thus, 
$\Phi_g(t)$ is a Salem polynomial of degree $2d$ if and only if $\Phi_g(t)$ is irreducible and is not a cyclotomic polynomial of degree $2d$, which is equivalent to saying that no $u_i(t)$  divides $\Phi_g(t)$   in $\Z [t]$. Since $\Phi_g(t)$ and $u_i(t)$ are monic polynomials in $\Z[t]$, it follows that no $u_i(t)$ divides
$\Phi_g(t)$  in ${\Z}[t]$ if and only if 
no $u_i(t)$ divides
$\Phi_g(t)$  in ${\C}[t]$.
The last condition is, by definition, equivalent to $ \Phi_g(t) \in P_{2d}(\C)\setminus  Q_{2d}$. 
The following lemma completes the proof:
\begin{lemma} \label{lem:complete}
There is an element $g \in G$ such that $\Phi_g(t)  \in P_{2d}(\C)\setminus Q_{2d}$. 
\end{lemma}
\begin{proof} By our assumption, 
we can apply Theorem \ref{thm:delaHarpe} to $G$, so  the Zariski closure of $G$ in ${\rm SO}(L_{\R})$ is ${\rm SO}(L_{\R})$.  On the other hand, ${\rm char}^{-1}( P_{2d} \otimes_{\Z} \Q \setminus Q_{2d})$  is Zariski open in ${\rm SO}(L)\otimes_{\Z}\Q$, and not empty by  Proposition~\ref{prop:evenness}. Thus it intersects $G \subset SO(L)(\Q)$ non-trivially. 
This finishes the proof. 
\end{proof}

This completes the proof of Theorem \ref{thm:BlancCantat}.

\end{proof}
The following theorem  is deduced from  \cite[Proof~of~Lem~3.6,~Claim~3.8]{Og09}.
\begin{thm} \label{thm:StableSublattice}
Let $L$ be a hyperbolic lattice of signature $(1, r+1)$ with $r \ge 0$ and let $e \in L$ be a primitive element such that $(e, e) = 0$. Let $G \subset {\rm SO}(L)(\Z)$ such that $G \simeq \Z^r$ as a group and  such that $g(e) = e$ for all $g \in G$. Then any $G$-stable $\R$-linear subspace $M$ of $L_{\R}$ is in the hypersurface $e^{\perp}$ in $L_{\R}$,  or $M = L_{\R}$. 
\end{thm}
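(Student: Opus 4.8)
The plan is to reduce to the case where $G$ consists of Eichler transvections along the isotropic direction $e$, and then run an explicit linear-algebra argument. First I would fix a hyperbolic complement: since $e$ is primitive isotropic, choose $f \in L_{\R}$ with $(e,f)=1$ and $(f,f)=0$, so that $L_{\R} = \langle e, f\rangle \oplus W$ with $W := \langle e, f\rangle^{\perp}$ negative definite of dimension $r$. Because $G$ fixes $e$, it preserves $e^{\perp} \cap L$ and $\langle e\rangle$, hence acts on the negative definite lattice $\bar W := (e^{\perp} \cap L)/\langle e\rangle$ of rank $r$. The orthogonal group of a definite lattice is finite, so the homomorphism $G \to {\rm O}(\bar W)$ has finite image; let $G' := \ker(G \to {\rm O}(\bar W))$. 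Then $G'$ has finite index in $G \simeq \Z^r$, so $G' \simeq \Z^r$ as well. Since any $G$-stable subspace is a fortiori $G'$-stable, it suffices to prove the asserted dichotomy for $G'$-stable subspaces.

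Next I would identify $G'$ explicitly. An isometry fixing $e$ and acting trivially on $e^{\perp}/\langle e\rangle$ is forced, by a direct computation with the isometry relations in the basis $\{e, f\}$ together with a basis of $W$, to be an Eichler transvection
$$t_{e,v}(x) = x + (x,e)\, v - (x,v)\, e - \frac{1}{2}(v,v)(x,e)\, e, \qquad v \in W.$$
The assignment $g \mapsto v_g$ is an injective homomorphism $G' \hookrightarrow W$ (injective because $t_{e,v}={\rm id}$ forces $v=0$; a homomorphism because $t_{e,v}\circ t_{e,v'} = t_{e,v+v'}$). Integrality of $g$ forces $(w, v_g) \in \Z$ for all $w \in e^{\perp} \cap L$, so the $v_g$ lie in the dual lattice of $\bar W$; being a rank-$r$ subgroup of a rank-$r$ lattice, the image $\Lambda := \{v_g : g \in G'\}$ is of finite index there, hence a full-rank lattice in $W$. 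This is the one place the hypothesis $\rank G = r = \dim W$ is used, and it is what makes $\Lambda$ span $W_{\R}$.

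Finally I would run the computation. Let $M$ be $G'$-stable with $M \not\subseteq e^{\perp}$, and pick $m \in M$ with $c := (m,e) \neq 0$. For each $v \in \Lambda$, stability gives
$$t_{e,v}(m) - m = c\, v - \Big((m,v) + \frac{1}{2}(v,v)\, c\Big) e \in M.$$
Forming this membership for $v$, for $v'$, and for $v+v'$ and subtracting, the $W$-components cancel and one is left with $c\,(v,v')\, e \in M$ for all $v,v' \in \Lambda$. Taking $v'=v \neq 0$ and using that $W$ is negative definite (so $(v,v) \neq 0$) yields $e \in M$. Feeding $e \in M$ back into the displayed membership gives $c\,v \in M$, hence $v \in M$, for every $v \in \Lambda$; since $\Lambda$ spans $W_{\R}$ we get $W \subseteq M$. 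Writing $m = \alpha e + \beta f + w$ with $\beta = (m,e) = c \neq 0$ and subtracting the now-available $\alpha e + w \in M$ shows $f \in M$. Thus $M \supseteq \langle e, f\rangle \oplus W = L_{\R}$, as required.

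The main obstacle is the structural reduction in the first two paragraphs: proving that, after passing to finite index, $G$ is a full-rank lattice of transvections. This rests on two inputs, namely the finiteness of the isometry group of the definite lattice $\bar W$, and the fact that an isometry fixing $e$ and acting trivially modulo $e$ on $e^{\perp}$ is an Eichler transvection. Once these are in hand, the negative definiteness of $W$ makes the scalar $c(v,v)$ nonzero, and the remainder is the short cancellation computation above.
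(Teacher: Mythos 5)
Your proof is correct, and its skeleton coincides with the paper's: the paper likewise passes to the finite-index kernel $H=\ker\big(G \to {\rm O}(e^{\perp}/\Z e)(\Z)\big)$ (Lemma \ref{lem:basis}), using exactly your two structural inputs --- finiteness of the orthogonal group of the negative definite quotient $e^{\perp}/\Z e$, and the fact that this kernel, still isomorphic to $\Z^r$, injects via its ``translation vectors'' into a rank-$r$ ambient group, so that the translations have full rank --- and then shows that a stable subspace not contained in $e^{\perp}$ must contain $e$, then the negative definite part, then everything. Where you genuinely differ is in the bookkeeping and in the key non-vanishing step. The paper works with matrices in a $\Q$-basis $\langle e,w_1,\dots,w_r,u\rangle$, treats the top-row entries ${\mathbf a}_i$ and the column entries $q_i{\mathbf e}_i$ as unrelated data, and proves that some ${\mathbf a}^t_j\cdot{\mathbf e}_i\neq 0$ (Lemma \ref{lem:non-zero}) indirectly: if all vanished, then $(g_j^k(u),g_j^k(u))-(u,u)$ would be a nonzero polynomial in $k$ with leading coefficient $q_j^2(w_j,w_j)<0$, contradicting isometry; this non-vanishing then drives the computation of Lemma \ref{lem:non-proper}. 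You instead put the elements of $G'$ in Eichler-transvection normal form, which hard-wires the isometry relations once and for all (the row entries become $-(\,\cdot\,,v_g)$), and your polarization over $v$, $v'$, $v+v'$ exhibits $c(v,v')e\in M$ directly, with negative definiteness supplying $(v,v)\neq 0$. These are two accountings of the same mechanism --- the paper's quadratic leading coefficient is precisely your $(v_j,v_j)$ --- but your route is more structural: Lemma \ref{lem:non-zero} becomes automatic rather than ad hoc, at the modest cost of verifying the transvection formula and the composition law $t_{e,v}t_{e,v'}=t_{e,v+v'}$, both of which are correct as you state them. One caveat shared by both arguments: each needs a nonzero translation vector, i.e.\ $r\geq 1$ (your step ``take $v'=v\neq 0$'' and the paper's Lemma \ref{lem:non-zero} both presuppose it). For $r=0$ the statement as written actually fails (hyperbolic plane, trivial $G$, the line spanned by $f$ is stable but neither inside $e^{\perp}$ nor all of $L_{\R}$); this is harmless, since the theorem is only invoked with $r=\rho(X)-2\geq 2$.
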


\begin{proof} 
Note that $G \subset {\rm SO}^{+}(L).$ Indeed, for $g\in G$,  $g(e) =e$.  Any small enough open ball   in $L_{\R}$ in the classical topology, centered in $e$, meets $C$ in an open $\sU$, such that for any $x\in \sU$, the distance between $g(x)$ and $x$ is small enough so it forces $g(x) $ to lie in $ C$. 
\begin{lemma} \label{lem:basis}
There are an integral basis 
$$\langle e, w_1, \cdots , w_r \rangle$$
of $e^{\perp} \subset L$,  so necessarily $(w_j, w_j) < 0$ for all $1 \le j \le r$, 
a $\Q$-basis 
$$\langle e, w_1, \cdots , w_r, u \rangle$$
of $L_{\Q}$, with $(e, u) = 1$,  and a finite index subgroup 
$$H := \langle g_1, \cdots , g_r \rangle \simeq \Z^r$$ 
of $G$,  such that 
$$g_i = \left(\begin{array}{rrr}
1 & {\mathbf a}^t_i & c_i\\
{\mathbf 0} & I_r  & q_i{\mathbf e}_i\\
0 & {\mathbf 0}^{t} & 1
\end{array} \right)\,\, ,$$
under the $\Q$-basis above. Here $1, 0 \in \Q$ are the unit and the zero, $c_i$ and $q_i \not= 0$ are in $\Q$, ${\mathbf e}_i$ is the $i$-th unit vector of $\Q^r$, $I_r$ is the $r \times r$ identity matrix, ${\mathbf 0} \in \Q^r$ is the zero vector, ${\mathbf a}^t_i$ 
is the transpose of a column vector ${\mathbf a}_i \in \Q^r$, and simiarly for ${\mathbf 0}^{t}$. 
\end{lemma}
\begin{proof} This is observed in \cite[Proof of Lemma 3.6, Claim 3.8]{Og09}.  
The essential part is that $e^{\perp}/\Z e$ is a negative definite lattice with respect to the $\Z$-valued bilinear form induced by $(*, **)$ and $G$ acts on this negative definite lattice $e^{\perp}/\Z e$. Note then that 
$$H := {\rm Ker} \big(G \to {\rm O}(e^{\perp}/\Z e)(\Z) \big)$$
is a finite index subgroup of $G$ as ${\rm O}(e^{\perp}/\Z e)(\Z)$ 
is a finite group. 
Since $G \simeq \Z^r$, we have then $H \simeq \Z^r$ by the fundamental theorem of finitely generated abelian groups.  Next choose an $\Z$-basis 
$$\langle e, w_1, \cdots , w_r \rangle$$
of $e^{\perp}$. As $e^{\perp}$ is of signature $(0, 1, r-1)$ (degenerate lattice), it follows that $(w_i, w_i) < 0$ and $(e.w_i) = 0$, in addition to $(e^2) = 0$. Choose then $u \in L_{\Q}$ such that $(e.u) = 1$. Such a vector $u$ exists as $L$ is hyperbolic. Then, 
$$\langle e, w_1, \cdots , w_r, u \rangle$$
form a $\Q$-basis of $L_{\Q}$ and the matrix representation of $H$ with respect to the $\Q$-basis above is of the form 
$$g = \left(\begin{array}{rrr}
1 & {\mathbf a}^t(g) & c(g)\\
{\mathbf 0} & I_r  & {\mathbf b}(g)\\
0 & {\mathbf 0}^{t} & 1
\end{array} \right)\,\, ,$$
for all $g \in H \subset G\subset {\rm SO}(L)(\Z)$. It is proved in \cite[Claim~3.8]{Og09} that 
the map
$$H \to \Q^r\,\, ;\,\, g \mapsto {\mathbf b}(g)$$
is an injective group homomorphism. This is easily checked by an explicit computation of matrices. Since $H \simeq \Z^r$, it follows that 
$$\Z^r \simeq H \simeq \langle {\mathbf b}(g) | g \in H \rangle \subset \Q^r\,\, .$$
Thus, again by the fundamental theorem of finitely generated abelian groups, we obtain   an $\Z$-basis of $H$ with required property.   
\end{proof}
\begin{lemma} \label{lem:non-zero}
There are integers $i, j$ such that $1 \le i, j \le r$ and
$${\mathbf a}^t_j\cdot {\mathbf e}_i \not= 0\,\, .$$
Here the right hand side is the product as matrices and we naturally identify $1 \times 1$ matrices with the entry.\end{lemma}
\begin{proof} Now assume to the contrary that ${\mathbf a}^t_j\cdot {\mathbf e}_i = 0$ for all $i, j$. Then ${\mathbf a}^t_j = 0$ for all $j$. Thus 
$$g_j(u) = c_je + q_jw_j + u\,\,$$
by the explicit matrix form. Then by induction one has 
$$g_j^k(u) = g_j((k-1)c_je + (k-1)w_j +u) = k(c_je + q_jw_j ) + u$$
for all positive integer $k$. Since $g_j^k$ preserves intersection form, it follows that
$$(u, u) = (g^k(u), g^k(u)) = (u, u) + 2k (q_j(w_j, u) + c_j)  + k^2q_j^2(w_j, w_j)$$
whence
$$(w_j, w_j)q_j^2k + 2(c_j +  q_j(w_j, u)) =0$$
 for all positive integers $k$, a contradiction to $\big((w_j, w_j) < 0, \ q_j\neq 0\big)$ in Lemma \ref{lem:basis}. 
\end{proof}
To conclude the Theorem \ref{thm:StableSublattice}, it suffices to confirm the following:
\begin{lemma} \label{lem:non-proper}
Let $M$ be a $G$-stable $\R$-linear subspace of $L_{\R}$. Assume that there is $v \in M_{\R}$ such that $v \not\in e^{\perp}$ in $L_{\R}$. Then 
$M = L_{\R}$.
\end{lemma}
\begin{proof}  By replacing $v$ by multiple by $\R^{\times}$, we may assume without loss of generality that
$$v = xe + \sum_{s=1}^{r} y_sw_s + u$$
where $x$ and $y_s$ are real numbers. Then by Lemma~\ref{lem:basis}
$$g_i(v) = v + ({\mathbf a}^t_i\cdot{\mathbf y} + c_i)e +  q_iw_i\,\, ,$$
where ${\mathbf y} \in \R^r$ is the vector whose $s$-th entry is $y_s$. 
But $g(v) \in M$ by the assumption and by the fact that $M$ is $\R$-linear, 
it follows that
$$v_{i} := ({\mathbf a}^t_i\cdot{\mathbf y} + c_i)e +  q_iw_i \in M\,\, .$$
By Lemma \ref{lem:basis}, it follows that
$$g_j(v_{i}) = v_i + q_i(^{t}{\mathbf a}_j\cdot{\mathbf e}_i)e\,\, .$$
Recall from Lemma \ref{lem:basis} that $q_i \not= 0$. Then, for the same reason above, it follows that
$$({\mathbf a}^t_j\cdot{\mathbf e}_i)e \in M$$
for all $1 \le i, j \le r$. Thus by Lemma \ref{lem:non-zero}, $e \in M$. 
Combining this with $v_{i} \in M$, it follows that $q_iw_i \in M$ for all $i$. 
Since $q_i \not= 0$, it follows that $w_i \in M$ for all $i$. Combining this 
with $v \in M$, it follows that $u \in M$. Since $e, w_i, u$ form a $\Q$-basis of $L_{\Q}$, thus a  $\R$-basis of $L_{\R}$, it follows that $L_{\R} \subset M$. This finishes the proof.
\end{proof} 

This completes the proof of Theorem \ref{thm:StableSublattice}.
\end{proof}

\section{Proof of Theorems \ref{IntroThm2} and \ref{IntroThm1}}\label{sec:main}
\noindent

First we prove Theorem \ref{IntroThm2}. Let $X$ be as in Theorem \ref{IntroThm2}. We recall that ${\rm NS}\, (X)$ 
is a hyperbolic lattice of signature $(1, \rho(X) -1)$. 

Recall that an elliptic fibration $\varphi : X \to  \P^1$ on the minimal surface $X$ is a projective, surjective morphism over a field $k$, which has a section $O: \P^1\to X$,  and such that the generic fiber  is a smooth curve of genus $1$ over $k(\P^1)$.  We denote by ${\rm MW}(\varphi)$ its Mordell-Weil group, viewed it as a subgroup  of the group of the birational automorphisms ${\rm Bir}(X)$. 

The following lemma is well known. 
\begin{lemma} \label{lem:faithful}
Let $\varphi : X \to  \P^1$ be an elliptic fibration. Then 
\begin{itemize}
\item[(1)]
 ${\rm MW}\, (\varphi)$ of $\varphi$ is a finitely generated abelian subgroup of ${\rm Aut}\, (X) \subset {\rm Bir}(X)$;
\item[(2)]
the action of ${\rm MW}\, (\varphi)$ on ${\rm NS}\, (X)$ is faithful. 
\end{itemize}
\end{lemma}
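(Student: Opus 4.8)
The plan is to realise ${\rm MW}(\varphi)$ concretely as the group $E(K)$ of $K$-rational points of the generic fibre $E$, where $K := k(\P^1)$ and $E$ is an elliptic curve over $K$ with origin the class of the given section $O$. So described, ${\rm MW}(\varphi)$ is abelian, its law being addition on $E$, and each $P \in E(K)$ yields the translation $t_P \colon E \to E$, $Q \mapsto Q + P$, an automorphism of $E$ over $K$ which spreads out to a birational self-map of $X$. The assignment $P \mapsto t_P$ is a homomorphism into ${\rm Bir}(X)$, and it is injective since $t_P$ recovers $P$ as the image $t_P(O)$ of the origin. I would then upgrade these birational maps to biregular ones: because $X$ is a minimal surface of Kodaira dimension $0$, every birational self-map of $X$ is an isomorphism, so $t_P \in {\rm Aut}(X)$ and ${\rm MW}(\varphi)$ embeds as an abelian subgroup of ${\rm Aut}(X) \subset {\rm Bir}(X)$.

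For finite generation I would invoke the Shioda--Tate theorem. Writing $T \subset {\rm NS}(X)$ for the trivial lattice generated by the class of $O$, the class of a fibre, and the components of the reducible fibres disjoint from $O$, Shioda--Tate provides an isomorphism
$$ {\rm MW}(\varphi) \xrightarrow{\ \sim\ } {\rm NS}(X)/T\,\, ,\qquad P \longmapsto [\overline{P}] \bmod T\,\, , $$
where $\overline{P}$ denotes the section curve of $P$ (so $\overline{O} = O$). Since ${\rm NS}(X)$ is a finitely generated abelian group, so is the quotient, and hence so is ${\rm MW}(\varphi)$; this completes part (1). (Alternatively one could cite the Lang--N\'eron theorem, but the Shioda--Tate description will be needed for part (2) anyway.)

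For part (2), suppose $t_P$ acts trivially on ${\rm NS}(X)$. The automorphism $t_P$ carries the curve $O$ to the curve $\overline{P}$, since on the generic fibre $t_P(O) = O + P = P$; hence $(t_P)_\ast [O] = [\overline{P}]$. Triviality of the action forces $[\overline{P}] = [O]$ in ${\rm NS}(X)$, so $[\overline{P}] \equiv [O] \pmod{T}$. By the injectivity of the Shioda--Tate map this gives $P = O$ in ${\rm MW}(\varphi)$, i.e. $t_P = \id$. Thus the action is faithful.

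As the lemma is classical, I expect no genuine obstacle: the mathematical content lies wholly in the two imported facts, namely that birational self-maps of a minimal surface of non-negative Kodaira dimension are biregular, and the Shioda--Tate isomorphism. The only step meriting care is the final reduction in (2), where one must observe that triviality of the entire action on ${\rm NS}(X)$ is already detected on the single class $[O]$ through its image $[\overline{P}]$, so that the injectivity of Shioda--Tate can be brought to bear.
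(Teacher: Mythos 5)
Your proof is correct, and part (1) is essentially the paper's argument: finite generation comes from Shioda's theory of Mordell--Weil lattices \cite{Sh90} (the paper notes that the fibration has at least one singular fiber for topological reasons, which is what makes that theory applicable), and the upgrade from ${\rm Bir}(X)$ to ${\rm Aut}(X)$ comes from minimality of the K3 surface $X$. Part (2), however, takes a genuinely different route. The paper does not invoke the Shioda--Tate isomorphism there at all; it argues by rigidity of the zero section: if $f^*$ is trivial on ${\rm NS}(X)$, then $f^*$ fixes the class $[O]$, and since $O \simeq \P^1$ with $(O,O) = -2 < 0$ on a K3 surface, the complete linear system satisfies $|O| = \{O\}$, so $f$ maps the curve $O$ to itself; an element of ${\rm MW}(\varphi)$ fixing the zero section is the identity. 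Your argument instead computes $(t_P)_*[O] = [\overline{P}]$ and feeds the resulting congruence $[\overline{P}] \equiv [O] \pmod{T}$ into the injectivity of the Shioda--Tate map. Both are valid. The paper's version is more economical for (2)---it needs only that $O$ is a rigid $(-2)$-curve, not the full Shioda--Tate theorem---while yours has the merit of handling (1) and (2) with a single imported statement and of making explicit how the section classes move under translation. One point you should make explicit: the Shioda--Tate isomorphism (and hence finite generation of ${\rm MW}(\varphi)$) requires the fibration to have a singular fiber, since for a smooth isotrivial fibration the Mordell--Weil group need not be finitely generated; for an elliptic K3 surface this is automatic because $e(X) = 24 \neq 0$, which is exactly the remark the paper inserts before citing \cite{Sh90}.
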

We denote  by   ${\rm MW}\, (\varphi)^*\subset {\rm O}^{+}({\rm NS}\, (X))$  the image of the induced representation on the N\'eron-Severi group   ${\rm NS}\, (X)$  of $X$. Thus   ${\rm MW}\, (\varphi )\xrightarrow{\cong} 
{\rm MW}\, (\varphi)^*$. 

\begin{proof} The group  ${\rm MW}\, (\varphi)$   is finitey generated by \cite{Sh90}, as $\varphi$, for topological reasons,  has at least one singular fiber. As $X$ is a smooth projective minimal surface, ${\rm Bir}\, (X) = {\rm Aut}\, (X)$. This implies (1).  One has  $| O |  = \{O\}$, where $|O|$ is the complete linear system containinig $O$. This is because $O \simeq \P^1$ and $(O, O) = -2 < 0$ as $X$ is a K3 surface. 

Let $f \in {\rm MW}\, (\varphi)$. If $f^* | {\rm NS}\, (X) = {\rm Id}$, then  in particular, 
the class of $O$ in ${\rm NS}(X)$ is invariant under $f^*$, thus $|O|$ is invariant under $f$, thus the section $O$ is invariant under $f$. 
 As  $f \in {\rm MW}(\varphi)$, this implies $f = {\rm Id}$ on $X$. This proved (2). 
\end{proof} 
In the sequel, the notations  are as in Section~\ref{ss:salem}.
We define in addition
$${\rm MW}\, (\varphi)^{*0} := {\rm MW}\, (\varphi)^* \cap {\rm SO}^{+}({\rm NS}\,(X)_{\R}).$$

\begin{lemma} \label{lem:infinite}
\begin{itemize}
\item[(i)]
The group ${\rm Aut}\, (X)^{*0}$ is infinite, thus ${\rm Aut}\, (X)$ is infinite as well.  
\item[(ii)] The abelian  groups  ${\rm MW}\, (\varphi)$
and ${\rm MW}\, (\varphi)^{*0}$ have the same rank.
\end{itemize}

\end{lemma}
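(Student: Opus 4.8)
The plan is to deduce both statements from the faithfulness of the Mordell--Weil action (Lemma~\ref{lem:faithful}) together with the finite-index bounds recorded in Section~\ref{ss:salem}, reducing everything to elementary facts about finitely generated abelian groups. I would prove (ii) first, since it is the structural input, and then read off (i).

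For (ii): by Lemma~\ref{lem:faithful} the restriction map identifies ${\rm MW}\,(\varphi)$ with ${\rm MW}\,(\varphi)^* \subset {\rm Aut}\,(X)^* \subset {\rm O}^+({\rm NS}\,(X))$, and this is a finitely generated abelian group. By definition ${\rm MW}\,(\varphi)^{*0} = {\rm MW}\,(\varphi)^* \cap {\rm SO}^+({\rm NS}\,(X)_{\R})$, so it is the intersection of ${\rm MW}\,(\varphi)^*$ with ${\rm SO}^+({\rm NS}\,(X))$. Since ${\rm SO}^+({\rm NS}\,(X))$ has index at most $2$ in ${\rm O}^+({\rm NS}\,(X))$ (Section~\ref{ss:salem}), the subgroup ${\rm MW}\,(\varphi)^{*0}$ has index at most $2$ in ${\rm MW}\,(\varphi)^*$. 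A finite-index subgroup of a finitely generated abelian group has the same rank, so ${\rm MW}\,(\varphi)^{*0}$ and ${\rm MW}\,(\varphi)^* \cong {\rm MW}\,(\varphi)$ have equal rank, which is (ii).

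For (i): the maximal-rank hypothesis on $\varphi_1$ gives that ${\rm MW}\,(\varphi_1)$ has rank $2d-2$, which is $\ge 2 > 0$ because $d \ge 2$; hence ${\rm MW}\,(\varphi_1)$ is infinite. By Lemma~\ref{lem:faithful} it embeds into ${\rm Aut}\,(X)$, and its image ${\rm MW}\,(\varphi_1)^*$ is an infinite subgroup of ${\rm Aut}\,(X)^*$; thus both ${\rm Aut}\,(X)^*$ and ${\rm Aut}\,(X)$ are infinite. Since ${\rm Aut}\,(X)^{*0}$ has index at most $2$ in ${\rm Aut}\,(X)^*$ (Section~\ref{ss:salem}), it too is infinite. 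Equivalently, one may invoke (ii): ${\rm MW}\,(\varphi_1)^{*0}$ has rank $2d-2>0$, hence is infinite, and it is contained in ${\rm Aut}\,(X)^{*0}$.

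Neither step presents a genuine obstacle; the only point requiring care is the chain of inclusions ${\rm MW}\,(\varphi)^* \subset {\rm Aut}\,(X)^* \subset {\rm O}^+({\rm NS}\,(X))$ and the index-at-most-$2$ claim for ${\rm SO}^+$, both of which are furnished by Section~\ref{ss:salem}, so that the index bounds descend to the subgroup ${\rm MW}\,(\varphi)^*$. Once these are in place the argument is purely a matter of comparing ranks of finitely generated abelian groups.
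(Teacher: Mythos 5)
Your proposal is correct and follows essentially the same route as the paper: identify ${\rm MW}\,(\varphi)$ with ${\rm MW}\,(\varphi)^*$ via the faithfulness in Lemma~\ref{lem:faithful}, observe that ${\rm MW}\,(\varphi)^{*0}$ has finite index (you sharpen this to index at most $2$) in ${\rm MW}\,(\varphi)^*$, conclude equality of ranks, and then deduce (i) from (ii) via the inclusion ${\rm MW}\,(\varphi_1)^{*0} \subset {\rm Aut}\,(X)^{*0}$ together with the positivity of the Mordell--Weil rank of $\varphi_1$. Your write-up is in fact slightly more explicit than the paper's, which leaves the rank-positivity point and the finitely-generated-abelian-group rank comparison implicit.
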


\begin{proof}  As ${\rm Aut}\, (X)^{*0} \supset  {\rm MW}\, (\varphi)^{*0} $, (i) follows from (ii).
We prove ii).  By  Lemma~\ref{lem:faithful}    ${\rm MW}\, (\varphi) \xrightarrow{\cong} {\rm MW}\, (\varphi)^*$. 
On the other hand, ${\rm SO}^{+} ({\rm NS}\, (X)_{\R})$ 
is a finite index subgroup of ${\rm O}({\rm NS}\, (X))(\R)$.  Thus ${\rm MW}\, (\varphi)^{*0}$ 
is a finite index sugroup of ${\rm MW}\, (\varphi)^*$.

\end{proof} 
Recall that  $X$ is as in Theorem~\ref{IntroThm2}. We denote by $e_1\in  {\rm NS}\, (X)$ the class of a fiber of $\varphi_1$.

\begin{lemma} \label{lem:inequality}
$X$ admits $g \in {\rm Aut}\, (X)$ such that $g^*(e_1) \not= e_1$ in ${\rm NS}\, (X)$. 
\end{lemma}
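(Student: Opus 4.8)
The plan is to produce the required $g$ as a \emph{translation} automorphism coming from the second elliptic fibration $\varphi_2$, using in an essential way that $\varphi_1$ and $\varphi_2$ are non-isomorphic and that ${\rm MW}(\varphi_2)$ has positive rank. Write $e_2 \in {\rm NS}(X)$ for the class of a fiber of $\varphi_2$. The first step is to record that $(e_1, e_2) > 0$. Indeed, each fiber class is primitive, nef and isotropic, and since $\varphi_1$ and $\varphi_2$ are non-isomorphic, $e_1$ and $e_2$ are non-proportional. Two linearly independent isotropic classes in a lattice of signature $(1, \rho-1)$ cannot be orthogonal, as their span would then be a $2$-dimensional totally isotropic subspace, which is impossible in a hyperbolic lattice; and since both classes lie in the closure of the positive cone, the intersection number is strictly positive. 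Note also that $e_1$ cannot be moved by ${\rm MW}(\varphi_1)$, whose translations fix the fibration $\varphi_1$ and hence $e_1$, so one genuinely needs the second fibration.

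Next, because ${\rm MW}(\varphi_2)$ has positive rank I would choose a section $P \in {\rm MW}(\varphi_2)$ of infinite order and set $g := t_P \in {\rm Aut}(X)$, the associated translation; this lies in ${\rm Aut}(X)$ by Lemma~\ref{lem:faithful}~(1). As $P$ has infinite order it has positive height, so its image $\phi(P)$ in the Mordell--Weil lattice satisfies $(\phi(P), e_2) = 0$ and $(\phi(P), \phi(P)) < 0$; in particular $\phi(P) \neq 0$ and $\phi(P) \notin \R e_2$, since $(e_2, e_2) = 0$. The key structural input from Shioda's theory \cite{Sh90} is that $t_P$ fixes $e_2$ and acts on ${\rm NS}(X)_{\Q}$ as the Eichler--Siegel transvection attached to the isotropic vector $e_2$ and the vector $\phi(P) \in e_2^{\perp}$, namely
$$t_P^*(x) = x - (x, e_2)\, \phi(P) + \Bigl( (x, \phi(P)) - \tfrac{1}{2}(\phi(P), \phi(P))\,(x, e_2)\Bigr) e_2 .$$
A direct check shows that the fixed subspace of this transvection in ${\rm NS}(X)_{\R}$ is exactly $e_2^{\perp} \cap \phi(P)^{\perp}$, and in particular is contained in the hyperplane $e_2^{\perp}$.

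The lemma then follows at once: since $(e_1, e_2) > 0$, the class $e_1$ does not lie in $e_2^{\perp}$, hence $e_1$ is not fixed by $t_P$, i.e. $g^*(e_1) = t_P^*(e_1) \neq e_1$. I expect the only substantive point to be the transvection description of the translation action, together with the verification that $\phi(P)$ is not proportional to $e_2$ (which is where infinite order, equivalently positive height, is used); the positivity $(e_1, e_2) > 0$ and the final deduction are formal once that description is in place. Alternatively, one can avoid invoking the full formula by arguing that $t_P^*$ is a parabolic isometry whose fixed subspace is contained in $e_2^{\perp}$, which again excludes $e_1$.
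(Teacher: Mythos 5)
Your strategy coincides with the paper's: both take $g=t_P$ for a section $P\in{\rm MW}\,(\varphi_2)$ of infinite order, both use $(e_1,e_2)>0$, and both reduce to showing that $g^*$ fixes no class $x$ with $(x,e_2)\neq 0$. The gap is your key claim that $t_P^*$ acts on ${\rm NS}\,(X)_{\Q}$ as the Eichler--Siegel transvection attached to $e_2$ and $\phi(P)$; this is not a consequence of Shioda's theory and is false in general. Indeed, the fixed space of that transvection is exactly $e_2^{\perp}\cap\phi(P)^{\perp}$, and by the very construction of $\phi$ the vector $\phi(P)$ is orthogonal to the whole trivial lattice, hence to every irreducible component of every fiber of $\varphi_2$; but whenever $\varphi_2$ has a reducible fiber and $P$ meets a non-identity component $\Theta_1$ of it, $t_P$ carries the identity component $\Theta_0$ to $\Theta_1$, so $t_P^*$ moves the class $[\Theta_0]\in e_2^{\perp}\cap\phi(P)^{\perp}$ and is therefore not that transvection. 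Nothing in the hypotheses of Theorem~\ref{IntroThm2} excludes reducible fibers, and in the intended application they are unavoidable: for $X(p)$ the second fibration is the Kummer fibration, of Mordell--Weil rank $4$, so its trivial lattice has rank $18$ and it has four reducible fibers of type $I_0^*$. Your fallback sentence (``$t_P^*$ is a parabolic isometry whose fixed subspace is contained in $e_2^{\perp}$'') is a true statement, but it is precisely the point that needs proof, so as written the argument begs the question at the crucial step.

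Fortunately, only the containment ${\rm Fix}(t_P^*)\subset e_2^{\perp}$ is needed, and it can be proved correctly in two ways. (a) Restrict to the generic fiber $E$ of $\varphi_2$: a class $x$ with $d=(x,e_2)$ restricts to a degree-$d$ class on $E$, and $t_P^*(x)\vert_E=x\vert_E-d([P]-[O])$ in ${\rm Pic}(E)$; hence $t_P^*x=x$ forces $dP=0$ in ${\rm MW}\,(\varphi_2)$, so $d=0$ because $P$ has infinite order. (b) The paper's route, which is the honest version of your parabolic remark: if $g^*$ fixed $e_1$ as well as $e_2$, it would fix $e_1+e_2$, whose square is positive by the Hodge index theorem (this is where $(e_1,e_2)>0$ enters); then $g^*$ preserves the negative definite lattice $(e_1+e_2)^{\perp}$, whose integral orthogonal group is finite, so $g^*$ has finite order on ${\rm NS}\,(X)$, and by faithfulness of the Mordell--Weil action (Lemma~\ref{lem:faithful}) $g=t_P$ itself would have finite order, contradicting the choice of $P$. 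With either repair your proof closes up and is in substance the paper's proof.
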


\begin{proof} By the assumption, $X$ admits a different elliptic fibration 
$\varphi_2 : X \to \P^1$ of positive Mordell-Weil rank. Thus, 
by Lemma~\ref{lem:infinite},
there is $g \in {\rm MW}\, (\varphi_2)$ of infinite order. 

Let $f \in {\rm NS}\, (X)$ be the fiber class of $\varphi_2$, thus in particular, $ g^*(f)=f$. 
By the Hodge index theorem, $(f+e_1, f+e_1) > 0.$ Therefore $(e_1 + f)^{\perp}$ in ${\rm NS}\, (X)$ is a negative definite lattice. 

Assume that $g^*(e_1) = e_1$ in ${\rm NS}\, (X)$. Then $g^{*}(e_1 + f) = e_1 + f$. Thus, by
 Lemma~\ref{lem:faithful}, $g$  acts faithfully on  $(e_1 + f)^{\perp}$. As ${\rm O}((e_1 + f)^{\perp})(
\Z)$ is a finite group, $g$ is  of finite order, a contradiction. This proves the Lemma.
\end{proof} 

\begin{definition} \label{def:maximum}
Consider all the elliptic fibrations $\Phi_i : X \to \P^1$ ($i \in I$) on $X$ 
with maximum Mordell-Weil rank $r = \rho (X) - 2$.  Let $e_i \in {\rm NS}\, (X)$ 
be the class of fibers of $\Phi_i$. Set
$${\mathcal S} := \{e_i \in {\rm NS}\, (X) | \  i \in I \},$$
and denote by $\R\langle {\mathcal S}\rangle \subset {\rm NS}(X)_{\R}$ the real sub vectorspace spanned by $\mathcal{S}$.
Note that $(e_i, e_i) = 0$, $e_i$ are numerically effective and $e_i $ are primitive in ${\rm NS}\, (X)$ for all 
$e_i \in {\mathcal S}$.
\end{definition}

\begin{lemma} \label{lem:positiveentropy} One has 
$\R\langle {\mathcal S}\rangle \xrightarrow{\cong}  
 {\rm NS}\, (X)_{\R}.$

 \end{lemma}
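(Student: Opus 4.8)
The plan is to realize $V := \R\langle\mathcal{S}\rangle$ as an invariant subspace for a rank‑$r$ abelian group fixing the isotropic class $e_1$, and then to invoke the dichotomy of Theorem~\ref{thm:StableSublattice}. First I would record that the set $\mathcal{S}$ of Definition~\ref{def:maximum} is invariant under ${\rm Aut}\,(X)^*$: if $f \in {\rm Aut}\,(X)$ and $\Phi_i$ is an elliptic fibration of maximal Mordell--Weil rank $r$ with fiber class $e_i$, then $\Phi_i \circ f$ is again such a fibration, since its Mordell--Weil group is carried isomorphically onto ${\rm MW}\,(\Phi_i)$ by conjugation with $f$ and hence is again of rank $r$, and its fiber class is $f^*(e_i)$. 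Thus $f^*(\mathcal{S}) \subseteq \mathcal{S}$ for every $f$, so $f^*$ permutes $\mathcal{S}$; consequently $V$ is stable under ${\rm Aut}\,(X)^*$, in particular under ${\rm Aut}\,(X)^{*0}$ and each of its subgroups.

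Next I would set up the application of Theorem~\ref{thm:StableSublattice} with $L = {\rm NS}\,(X)$, of signature $(1, 2d-1)$ so that $r = 2d-2$, and with $e = e_1$, which is primitive and isotropic. By the hypothesis of Theorem~\ref{IntroThm2}, $\varphi_1$ has maximal Mordell--Weil rank $r$, so that $e_1 \in \mathcal{S} \subset V$. The group $H := {\rm MW}\,(\varphi_1)^{*0}$ lies in ${\rm SO}\,({\rm NS}\,(X))(\Z)$, fixes $e_1$ (every Mordell--Weil translation preserves $\varphi_1$, hence its fiber class), and by Lemma~\ref{lem:infinite}(ii) together with the maximal rank hypothesis has rank $r$. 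Passing to a torsion-free finite index subgroup $G_0 \cong \Z^r$ of $H$, to which $V$ is still stable, Theorem~\ref{thm:StableSublattice} applies and yields that either $V \subset e_1^{\perp}$ or $V = {\rm NS}\,(X)_{\R}$.

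Finally I would exclude the case $V \subset e_1^{\perp}$. By Lemma~\ref{lem:inequality} there is $g \in {\rm Aut}\,(X)$ with $e' := g^*(e_1) \neq e_1$, and by the first paragraph $e' \in \mathcal{S} \subset V$; in particular $e'$ is primitive, numerically effective and isotropic. Were $(e_1, e') = 0$, the plane $\R e_1 + \R e'$ would be totally isotropic, which is impossible in signature $(1, 2d-1)$ unless $e'$ is proportional to $e_1$, forcing $e' = e_1$ by primitivity and nefness, a contradiction. Hence $(e_1, e') \neq 0$, so $e' \notin e_1^{\perp}$ and $V \not\subset e_1^{\perp}$; the dichotomy then gives $V = {\rm NS}\,(X)_{\R}$. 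The main obstacle is the very first step, namely checking that $\mathcal{S}$ is genuinely ${\rm Aut}\,(X)^*$-invariant (that automorphisms carry maximal-rank fibrations to maximal-rank fibrations) and arranging a subgroup isomorphic to $\Z^r$ so that Theorem~\ref{thm:StableSublattice} applies verbatim; the concluding isotropic-vectors argument is then elementary.
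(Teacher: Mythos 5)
Your proposal is correct and takes essentially the same route as the paper's proof: both establish that $\R\langle {\mathcal S}\rangle$ is ${\rm Aut}\,(X)$-stable, apply Theorem~\ref{thm:StableSublattice} to a rank-$r$ free abelian subgroup of ${\rm MW}\,(\Phi_1)^{*0}$ fixing $e_1$ to get the dichotomy $V \subset e_1^{\perp}$ or $V = {\rm NS}\,(X)_{\R}$, and then exclude the first case via Lemma~\ref{lem:inequality} together with the signature/Hodge-index fact that two non-proportional primitive nef isotropic classes cannot be orthogonal. The only differences are expository: you spell out the ${\rm Aut}\,(X)^*$-invariance of ${\mathcal S}$ and the totally-isotropic-plane argument, which the paper compresses into ``by definition of ${\mathcal S}$'' and ``by the Hodge index theorem and the fact that $e_i$ are primitive.''
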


\begin{proof} Recall that ${\mathcal S} \not= \emptyset$ by our assumption. Let $e_1 \in {\mathcal S}$ be the class of $\Phi_1$.
By  definition of ${\mathcal S}$,   $\R\langle {\mathcal S} \rangle$ is ${\rm Aut}\, ( X)$-stable.
Let $g \in {\rm Aut}\, (X)$  with $e_2:=g(e_1)\neq e_1$ (Lemma~\ref{lem:inequality}).  As ${\mathcal S}$ is stable under ${\rm Aut}\, (X)$, it follows that $e_2 \in {\mathcal S}$ as well. 

Assume to the contrary that $\R\langle {\mathcal S} \rangle \not= {\rm NS}\, (X)_{\R}$. Let $G_1\subset  {\rm MW}\, (\Phi_1)^{*0}$ be a free abelian group of rank $r = \rho(X) -2$. By Theorem~\ref{thm:StableSublattice} applied to $G_1$, one has  $\R\langle {\mathcal S} \rangle \subset e_1^{\perp}$ in ${\rm NS}\, (X)_{\R}$.
By the Hodge index theorem and the fact that $e_i$ are  primitive, one has  $(e_1, e_2) > 0$, a contradiction.

\end{proof}

\begin{lemma} \label{lem:invariant}
There is no  ${\rm Aut}\, (X)^{*0}$-stable $\R$-linear subspace of ${\rm NS}\, (X)_{\R}$ other than $\{0\}$ and ${\rm NS}\, (X)_{\R}$.
\end{lemma}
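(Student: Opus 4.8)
The plan is to argue by contradiction, exploiting that the fiber classes of all maximal–rank elliptic fibrations span ${\rm NS}(X)_{\R}$ (Lemma~\ref{lem:positiveentropy}) together with the rigidity provided by Theorem~\ref{thm:StableSublattice}. Suppose $M \subset {\rm NS}(X)_{\R}$ is an ${\rm Aut}(X)^{*0}$-stable $\R$-linear subspace with $M \neq \{0\}$ and $M \neq {\rm NS}(X)_{\R}$; I want to derive a contradiction.

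First, for each elliptic fibration $\Phi_i : X \to \P^1$ of maximal Mordell--Weil rank $r = \rho(X) - 2$ (these are exactly the fibrations indexing $\mathcal{S}$ in Definition~\ref{def:maximum}), I would produce a subgroup $G_i \subset {\rm MW}(\Phi_i)^{*0}$ with $G_i \simeq \Z^r$. This is possible because ${\rm MW}(\Phi_i)^{*0}$ is finitely generated abelian of rank $r$ by Lemma~\ref{lem:infinite}(ii), so one may take $G_i$ to be a free abelian subgroup realizing the full rank. By construction $G_i \subset {\rm SO}^{+}({\rm NS}(X)) \subset {\rm SO}({\rm NS}(X))(\Z)$, every element of $G_i$ fixes the fiber class $e_i$ (the Mordell--Weil group preserves fibers), and $(e_i, e_i) = 0$ with $e_i$ primitive. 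Thus $G_i$ and $e_i$ satisfy all the hypotheses of Theorem~\ref{thm:StableSublattice} for $L = {\rm NS}(X)$, whose signature $(1, 2d-1)$ matches the required $(1, r+1)$.

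Next, since $G_i \subset {\rm MW}(\Phi_i)^{*0} \subset {\rm Aut}(X)^{*0}$, the subspace $M$ is in particular $G_i$-stable. Theorem~\ref{thm:StableSublattice} then forces $M \subset e_i^{\perp}$ or $M = {\rm NS}(X)_{\R}$; as we assumed $M \neq {\rm NS}(X)_{\R}$, we get $M \subset e_i^{\perp}$ for every $i \in I$. Intersecting over all $i$,
$$M \subset \bigcap_{i \in I} e_i^{\perp} = \big(\R\langle \mathcal{S}\rangle\big)^{\perp}.$$
By Lemma~\ref{lem:positiveentropy} one has $\R\langle \mathcal{S}\rangle = {\rm NS}(X)_{\R}$, and since the intersection form on ${\rm NS}(X)_{\R}$ is non-degenerate, its orthogonal complement is $\{0\}$. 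Hence $M = \{0\}$, contradicting $M \neq \{0\}$, and the lemma follows.

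The genuinely substantive input is Lemma~\ref{lem:positiveentropy}, that the maximal--rank fiber classes already span ${\rm NS}(X)_{\R}$; granting that, the remainder is essentially bookkeeping. The main point to verify carefully is that the hypotheses of Theorem~\ref{thm:StableSublattice} really hold for each $G_i$ --- in particular that one can choose $G_i \simeq \Z^r$ inside ${\rm SO}({\rm NS}(X))(\Z)$ fixing $e_i$, which relies on the rank statement of Lemma~\ref{lem:infinite}(ii) --- and that ${\rm Aut}(X)^{*0}$-stability descends to $G_i$-stability, which is immediate from the inclusion ${\rm MW}(\Phi_i)^{*0} \subset {\rm Aut}(X)^{*0}$.
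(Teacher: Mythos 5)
Your proof is correct and follows essentially the same route as the paper: apply Theorem~\ref{thm:StableSublattice} to a free abelian subgroup $G_i \simeq \Z^r$ of ${\rm MW}(\Phi_i)^{*0}$ for each maximal-rank fibration, conclude $M \subset \bigcap_{i} e_i^{\perp}$, and then use the spanning statement of Lemma~\ref{lem:positiveentropy} together with non-degeneracy of the intersection form to force $M = \{0\}$. Your explicit verification of the hypotheses of Theorem~\ref{thm:StableSublattice} (via Lemma~\ref{lem:infinite}(ii)) is slightly more detailed than the paper's one-line citation, but the argument is the same.
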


\begin{proof} Let $M \neq  {\rm NS}\, (X)_{\R}$  be an  ${\rm Aut}\,(X)^{*0}$-stable $\R$-linear subspace of ${\rm NS}\, (X)_{\R}$.  For $\Phi_i$ as in Definition~\ref{def:maximum}, let   $G_i\subset {\rm MW}\, (\Phi_i)^{*0}$ be a free abelian subgroup of maximal rank $r = \rho(X) -2$  (Lemma~\ref{lem:faithful}). 
By Theorem \ref{thm:StableSublattice} applied to $G_i$, it follows 
$$M \subset \cap_{i\in I} e_i^{\perp} \subset {\rm NS}(X)_{\R}.$$
 As  the vectors  $e_i$,  for $i \in I$,   generate  ${\rm NS}\, (X)_{\R}$ and the intersection form is non-degenerate on ${\rm NS}\, (X)$, it follows that
$$\cap_{i \in I} e_i^{\perp} = \{0\}\,\, .$$
 This proves the lemma.
\end{proof}
\begin{proof}[Proof of Theorem~\ref{IntroThm2}]
By Lemma~\ref{lem:infinite} and Lemma~\ref{lem:invariant}, we can apply Theorem~\ref{thm:BlancCantat}.\end{proof}
\begin{proof}[Proof of Theorem~\ref{IntroThm1}]

As mentioned in the introduction, Shioda \cite{Sh13} proved that any $X(p)$ with 
$p = 11$ or $p > 13$ admits an elliptic fibration $\varphi_1 : X \to \P^1$ with Mordell-Weil rank $20 = 22 - 2$. On the other hand,  over $\F_p$, $X(p) \simeq {\rm Km}\, (E \times_{\F_p} E)$ for a supersingular elliptic curve $E$  over $\F_p$, and the fibration $\varphi_2 : X \to \P^1$ induced by the first projection $E \times_{ \F_p} E \to E$ is an elliptic fibration with  Mordell-Weil rank  $4$ (thus positive)    over $\bar \F_p$, by the formula of Mordell-Weil rank \cite{Sh90}. In particular, these two fibrations are non-isomorphic. So, we apply Theorem \ref{IntroThm2} to conclude Theorem \ref{IntroThm1} for $p = 11$ or $p >13$. The cases $p=2$ and $p=3$ are proved by \cite{BC13} and \cite{EO14}. This completes the proof of Theorem~\ref{IntroThm1}.

\end{proof}

\end{document}